\newtheorem{theorem}{Theorem}
\theoremstyle{plain}
\newtheorem{lemma}{Lemma}
\numberwithin{equation}{section}
\begin{document}
\title[On groups with cubic polynomial conditions]{On groups with cubic
polynomial conditions}
\author{A. Grishkov}
\address{Universidade Estadual de Sao Paulo, Sao Paulo, Brazil}
\email{grishkov@imeusp.br}
\author{R. Nunes }
\address{Universidade Federal de Goias, Goiania, Goias, Brazil}
\email{ricardo@gmail.com}
\author{S. Sidki}
\address{Departamento de Matem\'{a}tica, Universidade de Bras\'{\i}lia,
Brasilia, DF 70910-900, Brazil}
\email{ssidki@gmail.com}
\thanks{The first author thanks the University of Brasilia for visits during
2012-2014 and CNPq, FAPESP for support. The second author acknowledges
support from Procad-CAPES for post-doctoral studies. The third author thanks
Professor Eamonn O'Brien for discussions and hospitatlity at the University
of Auckland during November 2013.\\
All authors are grateful to the referees for constructive critisisms.}
\date{February, 2015}
\subjclass[2010]{ 20F05, 16S15, 16S34 }
\keywords{Cubic condictions on groups, Unipotent groups, Non-commutative
Groebner.}

\begin{abstract}
Let $F_{d}$ be the free group of rank $d$, \ freely generated by $\left\{
y_{1},...,y_{d}\right\} $, and let $\mathbb{D}F_{d}$ be the group ring over
an integral domain $\mathbb{D}$. Given a subset $E_{d}$ of $F_{d}$
containing the generating set, assign to each $s$ in $E_{d}$ a monic
polynomial $p_{s}\left( x\right)
=x^{n}+c_{s,n-1}x^{n-1}+...+c_{s,1}x+c_{s,0}\in \mathbb{D}\left[ x\right] $
and define the quotient ring 
\begin{equation*}
A\left( d,n,E_{d}\right) =\frac{\mathbb{D}F_{d}}{\left\langle p_{s}\left(
s\right) \mid s\in E_{d}\right\rangle _{ideal}}\text{.}
\end{equation*}%
When $p_{s}\left( s\right) $ is cubic for all $s$, we construct a finite set 
$E_{d}$ such that $A\left( d,n,E_{d}\right) $ has finite rank over an
extension of $\mathbb{D}$ by inverses of some of the coefficients of the
polynomials. When the polynomials are all equal to $(x-1)^{3}$ and $\mathbb{%
D=Z}\left[ \frac{1}{6}\right] $, we construct a finite subset $P_{d}$ of $%
F_{d}$ such that the quotient ring $A\left( d,3,P_{d}\right) $ has finite $%
\mathbb{D}$-rank and its augmentation ideal is nilpotent. The set $P_{2}$ is 
$\left\{
y_{1},y_{2},y_{1}y_{2},y_{1}^{-1}y_{2},y_{1}^{2}y_{2},y_{1}y_{2}^{2},\left[
y_{1},y_{2}\right] \right\} $ and we prove that $\left( x-1\right) ^{3}=0$
is satisfied by all elements in the image of $F_{2}$ in $A\left(
2,3,P_{d}\right) $.
\end{abstract}

\maketitle

\section{Introduction}

The impact of finite order conditions on a group has guided major
developments in group theory and so have similar finiteness questions in the
theory of algebras \cite{zelmanov}. The purpose of this paper is to examine
finitely generated groups where a finite number of its elements satisfy
polynomial equations in one variable in degrees $2$ and $3$.

The precise setting is as follows: we start with a free group $F_{d}$ of
rank $d$, \ freely generated by $\left\{ y_{1},...,y_{d}\right\} $, with the
group ring $\mathbb{D}F_{d}$ over an integral domain $\mathbb{D}$ and with a
set $E_{d}$ of elements of $F_{d}$ containing the generating set. We then
assign to each $s$ in $E_{d}$ a monic polynomial $p_{s}\left( x\right)
=x^{n}+c_{s,n-1}x^{n-1}+...+c_{s,1}x+c_{s,0}\in $ $\mathbb{D}\left[ x\right] 
$ and define the quotient ring 
\begin{equation*}
A\left( d,n,E_{d}\right) =\frac{\mathbb{D}F_{d}}{\left\langle p_{s}\left(
s\right) \mid s\in E_{d}\right\rangle _{ideal}}\text{.}
\end{equation*}%
If $c_{s,0}=0$ then $s$ satisfies a polynomial of degree less than $n$. If $%
c_{s,0}$ is not zero then we may replace $\mathbb{D}$ by its extension by
the inverse of $c_{s,0}$.

Let $a_{1},...,a_{d}$ be the respective images of $y_{1},...,y_{d}$ in $%
\mathbf{A}_{d}=A\left( d,n,E_{d}\right) $, let $G_{d}=\left\langle
a_{1},...,a_{d}\right\rangle $ and let $\mathbf{B}_{d}=\omega \left( \mathbf{%
A}_{d}\right) $ be the image of the augmentation ideal of $\mathbb{D}F_{d}$;
recall that $\mathbf{B}_{d}$ is generated by $u\left( g\right) =g-1$ for all 
$g\in G$.

First we prove a finiteness rank condition for $n=3.$

\begin{theorem}
Define the following subsets of $F_{d}$ 
\begin{equation*}
E_{1}=\left\{ y_{1}\right\} ,\text{ }M_{1}=\left\{ e,y_{1}^{\pm 1}\right\} 
\text{ }
\end{equation*}%
and inductively for $1\leq s$ $\leq $ $d-1$, 
\begin{eqnarray*}
E_{s+1} &=&E_{s}\cup M_{s}y_{s+1}^{\pm 1}, \\
M_{s+1} &=&M_{s}\cup M_{s}y_{s+1}^{\pm 1}M_{s} \\
&&\cup \text{ }M_{s}y_{s+1}^{-1}\left( M_{s}\backslash \left\{ e\right\}
\right) y_{s+1}M_{s}\text{.}
\end{eqnarray*}%
Suppose that each $s\in E_{d}$ satisfies some cubic polynomial $p_{s}\left(
x\right) =x^{3}+c_{s,2}x^{2}+c_{s,1}x+c_{s,0}\in $ $\mathbb{D}\left[ x\right]
$. Then (i) $A\left( d,n,E_{d}\right) $ is the linear span of the images of $%
M_{d}$ over an extension of $\mathbb{D}$ by inverses of some of the
coefficients; (ii) the set $E_{2}$ is $\left\{
y_{1},y_{2},y_{1}y_{2},y_{1}y_{2}^{-1}\right\} $ and for general $d$, the
elements of $E_{d}$ are primitive in the generators $\left\{
y_{1},...,y_{d}\right\} $; (iii) $\left\vert M_{2}\right\vert \leq 39$ and $%
\log _{3}\left\vert M_{d}\right\vert \leq 4.3^{d-2}$ for all $d$.
\end{theorem}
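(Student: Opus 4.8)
\noindent\emph{Proof strategy.} I would argue by induction on $d$, writing $y=y_{s+1}$ throughout the step from $s$ to $s+1$ and assuming, as inductive hypothesis, that $M_{s}$ spans $\mathbf{A}_{s}=A(s,3,E_{s})$ over the relevant localization of $\mathbb{D}$. The base case $d=1$ is immediate: $y_{1}\in E_{1}$ satisfies a cubic, and after inverting $c_{y_{1},0}$ its constant term is a unit, so $y_{1}^{-1}\in\mathrm{span}\{e,y_{1},y_{1}^{2}\}$ and $\mathbf{A}_{1}=\mathrm{span}\{e,y_{1},y_{1}^{-1}\}=\mathrm{span}(M_{1})$. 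For the inductive step I would recast (i) as a normal-form statement in the spirit of the paper's keyword ``non-commutative Groebner'': exhibit a terminating set of rewriting rules, coming from the relations $p_{s}(s)=0$ with $s\in E_{s+1}$, whose irreducible words are exactly $M_{s+1}$. Since every element of $\mathbf{A}_{s+1}$ is a $\mathbb{D}$-combination of alternating products $m_{0}y^{\varepsilon_{1}}m_{1}\cdots y^{\varepsilon_{k}}m_{k}$ with $m_{i}\in M_{s}$ (collapsing the $A_{s}$-parts by the inductive hypothesis), the spanning assertion follows once each such product reduces into $\mathrm{span}(M_{s+1})$.

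\noindent The rewriting rules arise from two consequences of each cubic $t^{3}+c_{2}t^{2}+c_{1}t+c_{0}=0$ with $t\in E_{s+1}$ (so that $c_{0}$ is a unit): the power rule $t^{2}=-c_{2}t-c_{1}-c_{0}t^{-1}$ and its inverse form. Taking $t=y$ gives $y^{\pm2}\in\mathrm{span}\{e,y,y^{-1}\}$, so all $y$-exponents may be taken in $\{\pm1\}$. Taking $t=m_{2}y$ and left-multiplying by $m_{2}^{-1}$ yields the same-sign sandwich rule
\[ y\,m_{2}\,y=-c_{2}\,y-c_{1}\,m_{2}^{-1}-c_{0}\,m_{2}^{-1}y^{-1}m_{2}^{-1}, \]
and $t=m_{2}y^{-1}$ gives the analogous rule for $y^{-1}m_{2}y^{-1}$. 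Because $M_{s}$ is closed under inversion (an easy induction) and $M_{s}y^{\pm1}\subseteq E_{s+1}$, all the group elements whose cubics are invoked are available. These rules place $M_{s}y^{\pm1}$, the same-sign products $m_{1}y^{\varepsilon}m_{2}y^{\varepsilon}$, and the opposite-sign products $m_{1}y^{-1}m_{2}y$ (for $m_{2}\neq e$) directly into $\mathrm{span}(M_{s+1})$. The one case needing a trick is $m_{1}ym_{2}y^{-1}$ with $m_{2}\neq e$: right-multiplying the sandwich rule by $y^{-2}$ turns its left side into $ym_{2}y^{-1}$, and reducing the resulting $y^{-2}$-factors by the power rule expresses it through the three admissible shapes, the crucial surviving term being the conjugation pattern $m_{2}^{-1}y^{-1}m_{2}^{-1}y$. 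This is precisely why the third family of $M_{s+1}$ is $M_{s}y^{-1}(M_{s}\setminus\{e\})y\,M_{s}$.

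\noindent The main obstacle is not these one-step reductions but their \emph{global confluence}. Multiplying an admissible word on the right by $y^{\pm1}$ can, after expanding a factor such as $ym_{2}y^{-1}$, reintroduce a second $y^{-1}\cdots y$ block and momentarily raise the $y$-length, so one must show the process still terminates and that all overlap ambiguities among the power rule and the two sandwich rules resolve. The clean route is the Diamond Lemma: fix a monomial order in which each rule's left side dominates its right side (ranking a $y\cdots y^{-1}$ block above the corresponding $y^{-1}\cdots y$ block, and both above shorter words), verify termination, and check that the dangerous overlaps---the triples $y^{\varepsilon}m\,y^{\varepsilon}m'y^{\varepsilon}$ and the mixed $y^{\varepsilon}m\,y^{-\varepsilon}m'y^{\varepsilon}$---reduce to zero. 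I expect this confluence verification to be the technical heart, and to be exactly what forces the explicit finite choice of $E_{s+1}$; once it holds, the irreducible words are precisely $M_{s+1}$, giving both (i) and finiteness of rank.

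\noindent Finally, for (ii) I would expand the recursion to get $E_{2}=E_{1}\cup M_{1}y_{2}^{\pm1}$ and discard the conditions forced by the others, using that over the localization a cubic for an element yields one for its inverse, which leaves $\{y_{1},y_{2},y_{1}y_{2},y_{1}y_{2}^{-1}\}$. Every element of $E_{d}$ has the form $m\,y_{k}^{\pm1}$ with $m\in F_{k-1}=\langle y_{1},\dots,y_{k-1}\rangle$; since the transvection $\tau_{m}\colon y_{k}\mapsto my_{k}$ (respectively its composite with the inversion $y_{k}\mapsto y_{k}^{-1}$) is an automorphism of $F_{d}$ fixing the other generators, $m\,y_{k}^{\pm1}$ is the image of a basis element and hence primitive. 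For (iii), the four shapes defining $M_{s+1}$ contribute disjoint reduced words, giving
\[ |M_{s+1}|=|M_{s}|^{3}+|M_{s}|^{2}+|M_{s}|, \]
so $|M_{2}|=39$ from $|M_{1}|=3$. Writing $\mu_{s}=|M_{s}|\geq3$ one has $\mu_{s+1}\leq\tfrac{13}{9}\mu_{s}^{3}$, and comparing $\mu_{s}$ with $3^{\,4\cdot3^{s-2}}$ (the initial slack $81/39$ being cubed at each step and therefore never lost) gives $\log_{3}|M_{d}|\leq4\cdot3^{d-2}$.
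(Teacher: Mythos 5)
Your one-step reductions are exactly the ones the paper uses: the cubic for $y$ puts every power $y^{k}$ in $\mathrm{span}\{e,y,y^{-1}\}$; the cubic for $m_{2}y\in E_{s+1}$, multiplied on the left by $m_{2}^{-1}$ \emph{and on the right by} $(m_{2}y)^{-1}$ (left multiplication alone gives a rule for $ym_{2}ym_{2}y$, not for $ym_{2}y$), yields the sandwich identity; and right-multiplying that identity by $y^{-2}$ is precisely the paper's device for converting a $y\cdots y^{-1}$ block into a $y^{-1}\cdots y$ block, which is why the third family of $M_{s+1}$ is one-sided. Your arguments for (ii) (transvection automorphisms) and (iii) (the recursion $|M_{s+1}|=|M_{s}|^{3}+|M_{s}|^{2}+|M_{s}|$ with slack-tracking) are sound and, if anything, more explicit than the paper's.

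The genuine gap is in how you propose to finish (i). You recast the claim as ``the irreducible words are exactly $M_{s+1}$'' and defer the proof to a Diamond Lemma confluence verification that you yourself call the technical heart but do not carry out. That step is not only missing; it is the wrong target, and in general it would fail. The theorem asserts only that the images of $M_{d}$ \emph{span} the quotient, with no independence claim, so all that is needed is a terminating reduction strategy, not a confluent one: extra, unresolved overlaps only shrink the span and cannot hurt the statement. Termination itself is elementary once the strategy is ordered correctly --- resolve same-sign adjacent syllables first (each such step strictly lowers the $y$-syllable count), and apply the $y\mu y^{-1}=(y\mu y)y^{-2}$ trick only to alternating words, observing that its surviving two-syllable terms $\mu^{-1}y^{-1}\mu^{-1}y^{\pm 1}$ then form a same-sign pair with the neighboring syllable (or end the word), so the count drops within two steps. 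This cascade is exactly what the paper's ``typical case'' computation $\alpha_{1}bab^{-1}\alpha_{2}b\alpha_{3}=\sum\alpha_{1}'b^{j}a^{k}(b\alpha_{2}b)\alpha_{3}=\cdots$ checks, followed by induction on syllable length. By contrast, confluence can genuinely fail here: for the unipotent polynomials $(x-1)^{3}$ the constant terms are units in $\mathbb{Z}$, so your setup would apply over $\mathbb{Z}$, yet the resulting quotient has $3$-torsion --- the paper obtains $VUV^{2}=-V^{2}UV$ only after multiplying a derived relation by $\tfrac{1}{3}$ --- whereas a confluent system of monic rules would force the quotient to be a free $\mathbb{Z}$-module on the irreducible words. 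So a proof whose key step is ``check that all overlaps resolve'' would get stuck exactly where you expect it to succeed; the spanning statement survives because it never needed that step.
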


Since the $d$-generated Burnside group $B\left( d,3\right) $ of exponent $3$
satisfies the unipotent condition $\left( x-1\right) ^{3}=0$ in its group
algebra over $GF\left( 3\right) $, the mentioned logarithmic upper bound is
at least $\log _{3}\left\vert B\left( d,3\right) \right\vert =d+\binom{d}{2}+%
\binom{d}{3}$ (\cite{V-L}, page 88).

The rest of the paper is a study of \ the quotient rings $A\left(
d,n,E_{d}\right) $ for unipotent polynomials $p_{s}\left( x\right) =\left(
x-1\right) ^{n}$ where $n=2,3$.

In case $n=2$ we prove:

\begin{theorem}
Define the quotient ring $\mathbf{A}_{d}=A\left( d,2,S_{d}\right) $ $=\frac{%
\mathbb{Z}F_{d}}{\left\langle \left( x-1\right) ^{2}\mid x\in
S_{d}\right\rangle }$ where 
\begin{equation*}
S_{d}=\left\{ y_{i}\text{ }\left( 1\leq i\leq d\right) ,y_{i}y_{j}\text{ }%
\left( 1\leq i<j\leq d\right) \right\} \text{.}
\end{equation*}%
Then: (i) the map%
\begin{eqnarray*}
\varphi &:&a_{1}\rightarrow a_{1,d}=\left( 
\begin{array}{cc}
I_{2^{d-1}} & 0 \\ 
I_{2^{d-1}} & I_{2^{d-1}}%
\end{array}%
\right) , \\
a_{i} &\rightarrow &a_{i,d}=\left( 
\begin{array}{cc}
a_{i-1,d-1} & 0 \\ 
a_{i-1,d-1} & \left( a_{i-1,d-1}\right) ^{-1}%
\end{array}%
\right) \text{ }\left( 2\leq i\leq d\right)
\end{eqnarray*}%
extends to a \ monomorphism $\varphi :$ $\mathbf{A}_{d}\rightarrow M(2^{d},%
\mathbb{Z)}$ and $\mathbf{A}_{d}$ is a free $\mathbb{Z}$-module of rank $%
2^{d}$; (ii) $\mathbf{B}_{d}$ is nilpotent of degree $d+1,$ $\left( \mathbf{B%
}_{d}\right) ^{d}=\mathbb{Z}u\left( a_{1}\right) ...u\left( a_{d}\right) $;
(iii) $G_{d}$ is a free $d$-generated nilpotent group of class $2$.
\end{theorem}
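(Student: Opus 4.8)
The plan is to show that $\mathbf{A}_d$ is just the exterior $\mathbb{Z}$-algebra on $d$ generators, and to use the explicit representation $\varphi$ to pin the rank down from below. Write $u_i=a_i-1$, so $a_i=1+u_i$ and the relation $(y_i-1)^2=0$ becomes $u_i^2=0$; hence $a_i^{-1}=1-u_i$ and $a_i^{\,n}=1+nu_i$. Expanding the product relation $(a_ia_j-1)^2=0$ with $a_ia_j-1=u_i+u_j+u_iu_j$ and $u_i^2=u_j^2=0$ yields
\[
u_iu_j+u_ju_i+u_ju_iu_j+u_iu_ju_i+u_iu_ju_iu_j=0\qquad(i<j).
\]
A direct check shows the exterior algebra $\Lambda_d=\Lambda(\mathbb{Z}^d)$ (generators $e_1,\dots,e_d$ with $e_i^2=0$, $e_ie_j=-e_je_i$) satisfies all of these relations under $u_i\mapsto e_i$, giving a surjection $\pi:\mathbf{A}_d\twoheadrightarrow\Lambda_d$ onto a free module of rank $2^d$. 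It therefore suffices to prove that $\mathbf{A}_d$ is spanned by $2^d$ elements and that $\varphi$ is faithful; the two bounds then force $\mathbf{A}_d\cong\Lambda_d$, free of rank $2^d$, with $\pi$ and $\varphi$ isomorphisms onto their images.

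First I would establish that $\varphi$ is a well-defined homomorphism by induction on $d$. The single-generator relations reduce to the inductive hypothesis through the identity $b^{-1}=2I-b$, valid whenever $(b-I)^2=0$: for $a_{1,d}$ one has $(a_{1,d}-I)^2=0$ by inspection, while for $i\ge 2$ the block form gives $(a_{i,d}-I)^2=\operatorname{diag}((b-I)^2,(b^{-1}-I)^2)$ with vanishing off-diagonal block, and $(b^{-1}-I)^2=(b-I)^2=0$ by induction with $b=a_{i-1,d-1}$. The product relations $(a_{i,d}a_{j,d}-I)^2=0$ are the delicate point and require a separate block computation, splitting the cases $i=1$ and $2\le i<j$ and feeding the inductive identities for $\mathbf{A}_{d-1}$ into the lower-left block. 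For faithfulness, an induction on $d$ using the block form shows the $2^d$ matrices $\varphi(u_T)$ ($T\subseteq\{1,\dots,d\}$, product in increasing order, $u_\emptyset=1$) are $\mathbb{Z}$-linearly independent — the bottom-left corner detects the top monomial $u_1\cdots u_d$, while the diagonal blocks reproduce the independence for $\mathbf{A}_{d-1}$ — so $\operatorname{rank}\varphi(\mathbf{A}_d)\ge 2^d$.

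For the upper bound I would exhibit a reduction rewriting every word in the $a_i^{\pm1}$ as a $\mathbb{Z}$-combination of the $2^d$ monomials $u_T$. The relation $u_i^2=0$ kills repeats, and the displayed product relation rewrites an out-of-order factor $u_ju_i$ ($i<j$) as $-u_iu_j$ plus strictly longer words; since any monomial of length $>d$ must repeat an index and hence vanish, a diamond-lemma (non-commutative Gröbner) argument shows the rewriting terminates, proving that the $u_T$ span $\mathbf{A}_d$, so $\operatorname{rank}\mathbf{A}_d\le 2^d$. Combining this with the surjection $\pi$ gives $\operatorname{rank}\mathbf{A}_d=2^d$, freeness, and $\mathbf{A}_d\cong\Lambda_d$; and since $\mathbf{A}_d$ then surjects onto the rank-$2^d$ free module $\varphi(\mathbf{A}_d)$, the map $\varphi$ is injective. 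This proves (i).

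Part (ii) is immediate from the length grading: $\mathbf{B}_d$ is the span of the $u_T$ with $|T|\ge1$, $(\mathbf{B}_d)^{k}$ is the span of those with $|T|\ge k$, so $(\mathbf{B}_d)^{d+1}=0$ while $(\mathbf{B}_d)^{d}=\mathbb{Z}\,u_1\cdots u_d$. For (iii) I would compute in $\Lambda_d$ that $[a_i,a_j]=(1-u_i)(1-u_j)(1+u_i)(1+u_j)=1+2u_iu_j$, which lies in the even (central) part and has infinite order; hence $G_d$ is nilpotent of class $\le2$, its derived group is $\langle 1+2u_iu_j\rangle\cong\mathbb{Z}^{\binom{d}{2}}$, and $G_d^{\mathrm{ab}}\cong\mathbb{Z}^d$. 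Mapping the free nilpotent group of class $2$ onto $G_d$ and reading the degree $\le 2$ parts $\sum_i n_iu_i$ and $\sum_{i<j}2m_{ij}u_iu_j$ of the image of the normal form $\prod_i a_i^{n_i}\prod_{i<j}[a_i,a_j]^{m_{ij}}$ shows this map is injective, so $G_d$ is free nilpotent of class $2$. I expect the main obstacle to be the two block computations in the induction for $\varphi$ — especially verifying $(a_{i,d}a_{j,d}-I)^2=0$ in the case $i=1$ — together with checking that the rewriting in the upper-bound step genuinely terminates rather than cycling.
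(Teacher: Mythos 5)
Your overall architecture --- a surjection onto the exterior algebra $\Lambda_d$ for the lower bound, spanning by the $2^d$ ordered monomials $u_T$ for the upper bound, then freeness and injectivity of $\varphi$ --- is sound, and the surjection onto $\Lambda_d$ is a clean substitute for the paper's direct-sum decomposition inside the matrix algebra (its Lemma 1). But your upper-bound step contains a genuine gap. You rewrite an out-of-order factor via
\[
u_ju_i=-u_iu_j-u_ju_iu_j-u_iu_ju_i-u_iu_ju_iu_j\qquad(i<j),
\]
and justify termination by ``any monomial of length $>d$ must repeat an index and hence vanish'' together with an appeal to the diamond lemma. Neither half stands. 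A monomial with a repeated index does not vanish unless the repeats are adjacent: the vanishing of $u_1u_2u_1$ in $\mathbf{A}_d$ is not a consequence of $u_i^2=0$; it is exactly the kind of relation you are in the middle of proving, so invoking it is circular (it holds in the image $\Lambda_d$, but that bounds the image, not $\mathbf{A}_d$). Moreover, the rule above replaces $u_ju_i$ by words that are strictly longer and contain $u_ju_i$ itself as a subword, so it is incompatible with every multiplicative monomial well-ordering (any such ordering has $1<u_j$, hence $u_ju_iu_j>u_ju_i$); Bergman's diamond lemma and standard noncommutative Gr\"obner machinery therefore do not apply to this system, and termination is never established. (A smaller instance of the same subtlety: your map $\pi$ should be defined by $y_i\mapsto 1+e_i$, a unit of $\Lambda_d$, so that it factors through the group-ring quotient; ``checking the relations on the $u_i$'' presupposes a presentation of $\mathbf{A}_d$ by those relations, which is not given a priori.)

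The missing ingredient is genuine anticommutation in $\mathbf{A}_d$, which is precisely what the paper establishes first (its part (0)) and which makes everything downstream trivial. You can get it in either of two short ways. Group-theoretically (the paper's route): $(z-1)^2=0$ forces $z^{-1}=2-z$, i.e.\ $u(z^{-1})=-u(z)$, for $z=a_i,\,a_j,\,a_ia_j$; expanding both sides of $u(a_j^{-1}a_i^{-1})=-u(a_ia_j)$ then gives $u_ju_i=-u_iu_j$. Or purely inside your displayed relation: multiplying it on the left by $u_i$ yields $u_iu_ju_i+u_iu_ju_iu_j=0$; multiplying that on the right by $u_j$ yields $u_iu_ju_iu_j=0$, hence $u_iu_ju_i=0$, and symmetrically $u_ju_iu_j=0$, so the displayed relation collapses to $u_ju_i=-u_iu_j$. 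With anticommutation in hand, sorting letters and killing adjacent repeats terminates trivially, the $u_T$ span, and the rest of your proposal --- the rank count, injectivity of $\varphi$, and parts (ii) and (iii) --- goes through essentially as written and agrees with the paper's proof.
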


In case $n=3$, first we prove:

\begin{theorem}
Let $\mathbb{D=Z}\left[ \frac{1}{6}\right] $. Consider the quotient ring $%
\mathbf{A}$ $=A\left( 2,3,P\right) =\frac{\mathbb{D}F_{2}}{\left\langle
\left( x-1\right) ^{3}\mid x\in P\right\rangle }$ where 
\begin{equation*}
P=\left\{
y_{1},y_{2},y_{1}y_{2},y_{1}^{-1}y_{2},y_{1}^{2}y_{2},y_{1}y_{2}^{2},\left[
y_{1},y_{2}\right] \right\} \text{,}
\end{equation*}%
Then \newline
(i) rank $\mathbf{A}$ $=18,$\newline
(ii) the ideal $\mathbf{B}$ is nilpotent of degree $6,$\newline
(iii) $G$ is a free $2$-generated nilpotent group of degree $4$,\newline
(iv) $(x-1)^{3}=0$ is satisfied by all elements of $G$.
\end{theorem}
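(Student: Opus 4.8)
The plan is to pass to the associated graded ring of $\mathbf{A}$ for the augmentation filtration and to control it by a non-commutative Gr\"obner basis computation, then lift the resulting normal form back to $\mathbf{A}$ and read off the group-theoretic consequences. Write $u_{1}=u(a_{1})=a_{1}-1$ and $u_{2}=u(a_{2})=a_{2}-1$, so that $\mathbf{B}$ is generated as an ideal by $u_{1},u_{2}$, and assign to $u_{1},u_{2}$ the weight $1$. Each relation $(x-1)^{3}=0$ for $x\in P$ becomes a non-homogeneous relation in $u_{1},u_{2}$: for instance $a_{1}a_{2}-1=u_{1}+u_{2}+u_{1}u_{2}$ gives $(u_{1}+u_{2}+u_{1}u_{2})^{3}=0$, while $[a_{1},a_{2}]-1$ lies in $\mathbf{B}^{2}$ and contributes a relation of weight $\geq 6$. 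Taking leading (lowest-weight) forms yields the homogeneous relations $u_{1}^{3}=u_{2}^{3}=0$ together with $S_{1}:=u_{1}^{2}u_{2}+u_{1}u_{2}u_{1}+u_{2}u_{1}^{2}=0$ and $S_{2}:=u_{1}u_{2}^{2}+u_{2}u_{1}u_{2}+u_{2}^{2}u_{1}=0$, the latter two obtained by polarising the six cubes $(\lambda u_{1}+\mu u_{2})^{3}$ coming from $y_{1},y_{2},y_{1}y_{2},y_{1}^{-1}y_{2},y_{1}^{2}y_{2},y_{1}y_{2}^{2}$ at the ratios $(\lambda:\mu)=(1:0),(0:1),(1:1),(1:-1),(2:1),(1:2)$.

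For parts (i) and (ii) I would run the Gr\"obner machinery. The first point to establish is that $\{u_{1}^{3},u_{2}^{3},S_{1},S_{2}\}$ alone is \emph{not} enough: counting normal words avoiding the factors $u_{1}^{3},u_{2}^{3},u_{1}^{2}u_{2},u_{1}u_{2}^{2}$ gives the sequence $1,2,4,4,5,4,5$ in weights $0,\dots,6$ and does not terminate. Hence the sub-leading terms of the relations on $a_{1}a_{2},a_{1}^{-1}a_{2},a_{1}^{2}a_{2},a_{1}a_{2}^{2}$ and the relation $([a_{1},a_{2}]-1)^{3}=0$ must supply further generators in weights $4,5,6$. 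The task is to complete these to a Gr\"obner basis (degree-lexicographic order, $u_{1}>u_{2}$), to check that every overlap $S$-polynomial reduces to zero, and to verify that the set of normal words has exactly $18$ elements with none of weight $\geq 6$. This gives at once rank $\mathbf{A}\leq 18$ and $\mathbf{B}^{6}=0$; Theorem~1 applied with $d=2$ guarantees in advance a finite spanning set, so the computation terminates. For the matching lower bound I would exhibit an explicit $18$-dimensional $\mathbb{D}$-algebra $\widetilde{\mathbf{A}}$ with a prescribed basis and multiplication table, define elements $a_{1},a_{2}\in 1+\widetilde{\mathbf{B}}$, check directly that all seven cubic relations hold, and thereby obtain a surjection $\mathbf{A}\twoheadrightarrow\widetilde{\mathbf{A}}$; together with rank $\mathbf{A}\leq 18$ this forces $\mathbf{A}\cong\widetilde{\mathbf{A}}$ of rank exactly $18$ and pins down $\mathbf{B}^{5}\neq 0$.

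For (iii) there is an evident surjection from the free nilpotent group $N$ of class $4$ on two generators onto $G$; it factors through $G$ precisely when $\gamma_{5}(G)=1$, which I would verify directly in the explicit model $\widetilde{\mathbf{A}}$ by checking that the images of the weight-$5$ basic commutators are trivial. Injectivity, hence $G\cong N$ and class exactly $4$, then follows from the rank computation: the eight basic commutators $a_{1},a_{2},[a_{1},a_{2}],[a_{1},a_{2},a_{1}],[a_{1},a_{2},a_{2}]$ and the three of weight $4$ map to elements of $1+\widetilde{\mathbf{B}}$ whose coordinates form a Mal'cev basis, visibly independent in $\widetilde{\mathbf{A}}$, so the surjection $N\to G$ cannot collapse.

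The main obstacle is (iv): the cube relation is imposed only on the seven elements of $P$, yet must hold for \emph{every} $g\in G$. Since $\mathbf{A}$ is free of rank $18$ over $\mathbb{D}$ it embeds in $\mathbf{A}\otimes\mathbb{Q}$, so I would argue over $\mathbb{Q}$, where $\log\colon 1+\mathbf{B}\to\mathbf{B}$ and $\exp$ are mutually inverse (using $\mathbf{B}^{6}=0$) and $u(g)^{3}=0\iff(\log g)^{3}=0$. Setting $x=\log a_{1}$, $y=\log a_{2}$, the statement becomes $\ell^{3}=0$ for every $\ell$ in the free nilpotent Lie algebra $\mathfrak{g}=\langle x,y\rangle$, equivalently a finite list of polynomial identities in the eight Mal'cev coordinates $n_{1},\dots,n_{8}$ of $g=a_{1}^{n_{1}}a_{2}^{n_{2}}\cdots$; since $u_{i}^{3}=0$ bounds each coordinate of $(g-1)^{3}$ by a polynomial of bounded degree in the $n_{i}$, this reduces (iv) to a finite symbolic verification. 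I expect the delicate step to be organising this propagation by weight: the leading identity $(\lambda x+\mu y)^{3}=0$ follows by polarising the six directions above, but the weight-$4$ and weight-$5$ corrections require the sub-leading parts of the $P$-relations together with the commutator relation, and it is precisely the choice of the enlarged set $P$ (rather than the smaller $E_{2}$ of Theorem~1) that makes these corrections close up so that the cubic condition propagates to all of $G$.
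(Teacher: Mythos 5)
Your plan for (i)--(iii) is essentially the paper's own proof. The paper carries out the completion you describe by hand (the fifteen numbered substitution steps of Section 4), with one ingredient you correctly anticipated: the cube of the commutator is computed (Lemma 2) to equal $6U^{2}V^{2}UV$ modulo the relations coming from the other six elements of $P$, so imposing $([y_{1},y_{2}]-1)^{3}=0$ and inverting $6$ kills the surviving weight-$6$ monomial; it then reaches the same $18$-element monomial basis and proves the lower bound exactly as you propose, via an explicit $18$-dimensional model (two $18\times 18$ upper triangular matrices over $\mathbb{D}$, checked against the defining relations by GAP), with freeness of $G$ as a class-$4$ nilpotent group read off from the expressions of the basic commutators in that basis --- your Mal'cev-basis argument. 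The genuine divergence is in (iv). The paper never leaves $\mathbb{D}$ and uses no $\log/\exp$: it computes the cube of a general $X\in\mathbf{B}$ in the basis (Lemma 4) and finds that $X^{3}=0$ is the single equation $(\ast)$, namely $\frac{x_{1}x_{2}(x_{1}+x_{2})}{2}-x_{1}x_{2}x_{3}-x_{1}x_{2}x_{4}+x_{2}^{2}x_{5}+x_{1}^{2}x_{6}=0$, involving only the coordinates of weight at most $2$; then (Lemma 5) it verifies $G-1\subseteq\mathcal{V}$ from the normal form $g=(1+U)^{n}(1+V)^{m}(1+C)$ with $1+C\in[G,G]$, using that a commutator factor merely shifts $(x_{3},x_{4})\mapsto(x_{3}+y,x_{4}-y)$, which preserves $(\ast)$ because $x_{3}$ and $x_{4}$ enter $(\ast)$ with the same coefficient $-x_{1}x_{2}$. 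Your route --- embed into $\mathbf{A}\otimes\mathbb{Q}$ (valid by the freeness established in (i)), use $u(g)^{3}=0\Leftrightarrow(\log g)^{3}=0$ (each of $u(g)$ and $\log g$ is a polynomial with zero constant term in the other), and reduce via BCH to the identity $\ell^{3}=0$ on the $\mathbb{Q}$-Lie algebra generated by $\log a_{1},\log a_{2}$ --- is sound, and the Lie identity you need is in fact true: a Lie element has low-weight coordinates $x_{2}=\alpha$, $x_{1}=\beta$, $x_{6}=-\alpha/2$, $x_{5}=-\beta/2$, $x_{4}=\gamma=-x_{3}$, which satisfy $(\ast)$ identically. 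In particular, your worry about organising weight-$4$ and weight-$5$ corrections dissolves: the paper's computation shows the cube of an element of $\mathbf{B}$ depends only on its coordinates of weight at most $2$. What the paper's route buys is economy --- it stays integral, needs no BCH, and handles closure under the group operation by the symmetry of $(\ast)$ in $x_{3},x_{4}$; what yours buys is a cleaner conceptual endpoint (an Engel-type cube identity in a nilpotent Lie algebra), but the computational core, cubing a generic element of the $18$-dimensional algebra, is identical in both and cannot be bypassed.
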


The development of relations in the proof of Theorem 3 is an abridged form
of that presented in \cite{GOS} where we traced the effect of gradually
introducing relations to $\mathbb{Z}F_{2}$ and also detected the appearance
of torsion conditions in the ring. There, we calculated relations by hand
and confirmed them by using the computer program GAP \cite{gap}. In
particular, we used the package GBNP \cite{gbnp} along with some special
substitution routines \cite{ricardo}. If the analysis were carried further,
it would provide a complete description of $\frac{\mathbb{Z}F_{2}}{%
\left\langle \left( x-1\right) ^{3}\mid x\in P\right\rangle }$. In the
present paper we take the shorter route by analyzing directly the ring $%
\mathbf{A}$. In certain places, when the computational procedure becomes
clear, we refer to \cite{GOS} for more details. Item (iv) of the theorem
follows from a determination of the set of solutions of $X^{3}=0$ in $%
\mathbf{B}$, as an algebraic affine variety over $\mathbb{D}$.

It follows from a deep theorem of Zelmanov on Lie algebras satisfying an $n$%
-Engel identity over a field $k$ of characteristic zero, that if all the
elements of a finitely generated group $H$ satisfy $\left( x-1\right) ^{n}=0$
then $H$ is nilpotent (\cite{vovsi}, page 71-72). This result is also true
for fields $k$ of finite characteristic $p$ which is large enough. The lower
bound for $p$ provided by the proof is super-exponential in $n$; it was
conjectured by Zelmanov that $p\geq 2n$ is sufficient.

We conclude the paper with:

\begin{theorem}
Let $\mathbb{D=Z}\left[ \frac{1}{6}\right] $. There exists a finite subset $%
P_{d}$ of $F_{d}$, extending $P$, such that the quotient ring $\mathbf{A}%
_{d}=A\left( d,3,P_{d}\right) =\frac{\mathbb{D}F_{d}}{\left\langle \left(
x-1\right) ^{3}\mid x\in P_{d}\right\rangle }$ has finite $\mathbb{D}$-rank
and the augmentation ideal $\mathbf{B}_{d}$ is nilpotent.
\end{theorem}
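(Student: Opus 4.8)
The plan is to argue by induction on $d$, taking Theorem 3 as the base case $d=2$ with $P_{2}=P$. For the inductive step I would build $P_{d+1}$ from $P_{d}$ by the same recursion that produces $E_{s+1}$ and $M_{s+1}$ in Theorem 1, but seeded with $P_{d}$ rather than $E_{d}$; concretely $P_{d+1}=P_{d}\cup M_{d}y_{d+1}^{\pm 1}\cup\{\text{the further products appearing in }M_{d+1}\}$, so that $E_{d}\subseteq P_{d}$ for every $d$. The point of insisting $E_{d}\subseteq P_{d}$ is that finite rank then comes for free: the polynomial $(x-1)^{3}=x^{3}-3x^{2}+3x-1$ has coefficients $-3,3,-1$, all units in $\mathbb{D}=\mathbb{Z}[\tfrac{1}{6}]$, so the extension of $\mathbb{D}$ required in Theorem 1 is $\mathbb{D}$ itself. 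Hence $A(d,3,E_{d})$ is already a finite $\mathbb{D}$-module spanned by the image of $M_{d}$, and $\mathbf{A}_{d}=A(d,3,P_{d})$, being a quotient of it by the additional relations $(s-1)^{3}=0$ for $s\in P_{d}\setminus E_{d}$, has finite $\mathbb{D}$-rank as well.

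The substantive part is the nilpotency of $\mathbf{B}_{d}$, and here the plan is to reduce to a triangularization statement over $\mathbb{Q}$. First I would show that the extra products placed into $P_{d}$ force the universal condition $(g-1)^{3}=0$ for every $g\in G_{d}$, the analogue for general $d$ of Theorem 3(iv); the inductive rule for $P_{d+1}$ is designed precisely so that the reduction of $(g-1)^{3}$ for a new element $g=m\,y_{d+1}^{\pm 1}m'$ (with $m,m'$ ranging over $M_{d}$) falls back on cubic relations already imposed. Granting this, every element of $G_{d}$ acts unipotently on the finite-dimensional $\mathbb{Q}$-algebra $\mathbf{A}_{d}\otimes_{\mathbb{D}}\mathbb{Q}$ through the left regular representation, so $G_{d}$ is a group of unipotent operators. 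Kolchin's theorem then triangularizes $G_{d}$ simultaneously, whence the subalgebra generated by the $u(g)=g-1$, which is exactly $\mathbf{B}_{d}\otimes_{\mathbb{D}}\mathbb{Q}$, consists of strictly triangular operators and is therefore nilpotent: $(\mathbf{B}_{d}\otimes\mathbb{Q})^{N}=0$ for some $N$.

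It remains to descend from $\mathbb{Q}$ to $\mathbb{D}$. From $(\mathbf{B}_{d}\otimes\mathbb{Q})^{N}=0$ one gets that $(\mathbf{B}_{d})^{N}$ is a $\mathbb{D}$-torsion submodule of the finitely generated module $\mathbf{A}_{d}$, hence annihilated by some integer coprime to $6$ since $\mathbb{D}=\mathbb{Z}[\tfrac{1}{6}]$ is a principal ideal domain. I would therefore record, as part of the induction, that $\mathbf{A}_{d}$ is $\mathbb{D}$-torsion free --- indeed a free $\mathbb{D}$-module, as is the case for the rank-$18$ ring $\mathbf{A}$ of Theorem 3. Torsion freeness forces $(\mathbf{B}_{d})^{N}=0$, giving nilpotency of $\mathbf{B}_{d}$ over $\mathbb{D}$.

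The hard part is the inductive step for the universal condition together with the freeness of $\mathbf{A}_{d}$. Proving $(g-1)^{3}=0$ for all $g\in G_{d}$ is, for $d=2$, the affine-variety computation behind Theorem 3(iv), and for larger $d$ the difficulty is to verify that the recursively chosen, still finite, set $P_{d}$ really does reduce every new cubic $(m\,y_{d+1}^{\pm1}m'-1)^{3}$ to relations already present --- i.e.\ that no genuinely new relation is needed at each stage, so that $P_{d}$ stays within the $M_{d}$-controlled bound of Theorem 1(iii). Establishing torsion freeness is the second delicate point: the spanning set $M_{d}$ is generally not a basis (already for $d=2$ the rank drops from the bound $|M_{2}|\le 39$ to $18$), and the collapses that cut it down are exactly where $\mathbb{D}$-torsion could enter. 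I would control both the universal condition and torsion freeness simultaneously by carrying a triangular normal form for $\mathbf{B}_{d}$ through the induction, as is done explicitly for $n=2$ in Theorem 2.
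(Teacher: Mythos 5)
Your finite-rank argument is sound, and it is in fact a legitimate shortcut the paper does not take: for $(x-1)^{3}$ the constant term $\varepsilon _{x}$ in $x^{3}=3x^{2}-3x+1$ is $1$, so Theorem 1 applies over $\mathbb{D}$ itself, and any $P_{d}$ containing $E_{d}$ makes $\mathbf{A}_{d}$ a quotient of the $M_{d}$-spanned ring $A(d,3,E_{d})$. The genuine gap is in the nilpotency half: your route runs through two statements that you flag as ``the hard part'' but never prove, and each is at least as hard as Theorem 4 itself. First, the universal condition $(g-1)^{3}=0$ for all $g\in G_{d}$ is strictly stronger than anything the paper establishes for $d\geq 3$; Theorem 3(iv) is a $d=2$ result, and even there it required the precise set $P$ (note that $E_{2}=\{y_{1},y_{2},y_{1}y_{2},y_{1}y_{2}^{-1}\}$ is nowhere near sufficient --- the elements $y_{1}^{2}y_{2}$, $y_{1}y_{2}^{2}$, $[y_{1},y_{2}]$ are essential) together with the complete structure theory of $\mathbf{A}$: the table of relations, the rank-$18$ computation, and the affine-variety lemma describing all solutions of $X^{3}=0$ in $\mathbf{B}$. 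Your assertion that an $E/M$-type recursion ``seeded with $P$'' forces this condition in rank $d+1$ is exactly the content that would have to be proved, and you offer no mechanism; without it Kolchin's theorem cannot be invoked at all. Second, the descent from $\mathbb{Q}$ to $\mathbb{D}$ needs $\mathbf{A}_{d}$ torsion-free: otherwise $(\mathbf{B}_{d}\otimes \mathbb{Q})^{N}=0$ only says $(\mathbf{B}_{d})^{N}$ is killed by an integer coprime to $6$, and residual $p$-torsion for $p\geq 5$ would survive. Proving freeness for every $d$ would require an analogue of the explicit $18$-dimensional representation (verified by GAP) at every stage --- again, the hard work, not a bookkeeping step.

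The paper's actual proof avoids both issues. It builds $P_{d+1}$ not from the $E/M$ recursion but from $\widetilde{P_{d}}$, the closure under initial segments of $P_{d}$ together with words $r_{\nu }$ realizing a finite monomial spanning set $\mathbf{W}_{d}$ of $\omega _{d}$, and sets $P_{d+1}=\widetilde{P_{d}}\cup \{e,h^{\pm 1}\mid h\in \widetilde{P_{d}}\}y_{d+1}\cup \widetilde{P_{d}}\,y_{d+1}^{2}\cup \{h^{2}\mid h\in \widetilde{P_{d}}\}y_{d+1}$. This is engineered so that for each $q\in \widetilde{P_{d}}$ the subring $\langle q,a_{d+1}\rangle $ of $\mathbf{A}_{d+1}$ is a homomorphic image of $\mathbf{A}_{2}$, which imports the two-generator relations $V\mu V^{2}=-V^{2}\mu V$, $V^{2}\mu V^{2}=0$, and an expansion of $V\mu V$ into monomials $\mu _{1}V\mu _{2}$, $\mu _{1}^{\prime }V^{2}$, $V^{2}\mu _{2}^{\prime }$ whose $\mathbf{C}_{d}$-lengths are at least $l(\mu )$. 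Finite rank then follows from the explicit spanning set $\mathbf{W}_{d+1}$, and nilpotency is a purely combinatorial count: if $\rho _{d}$ is the nilpotency degree of $\omega _{d}$ and a monomial $W$ satisfies $L(W)\geq 3\rho _{d}$, then after rewriting, some factor lying in $\mathbf{C}_{d}$ has length at least $\rho _{d}$ and so vanishes by induction. Everything is done directly over $\mathbb{D}$; no universal cubic condition, no passage to $\mathbb{Q}$, and no torsion analysis is ever needed. If you wish to salvage your plan, the lemma you would actually have to prove is precisely this rewriting property for $V\mu V$ --- and once you have it, the Kolchin detour becomes superfluous.
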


We hope that the approach we have taken in this paper may be applied to
Burnside groups of exponent $5$ satisfying $(x-1)^{4}=0$ in characteristic $%
5 $. For recent works on this topic, see \cite{abd}, \cite{traus}.

\section{General Cubic Conditions (Proof of Theorem 1)}

(i) We treat first the case $d=2$. Write $a_{1}=a,a_{2}=b$. We may assume
each $x\in \left\{ a,b,ab,a^{-1}b\right\} $ satisfies%
\begin{equation*}
x^{3}=\delta _{x}x^{2}+\gamma _{x}x+\varepsilon _{x}
\end{equation*}%
for some $\gamma _{x},\delta _{x}$ $\varepsilon _{x}$ in $\mathbb{D}$ and $%
\varepsilon _{x}$ invertible in $\mathbb{D}$.

Multiplying the above by $x^{-1}$ produces%
\begin{equation*}
x^{2}=\varepsilon _{x}x^{-1}+\delta _{x}x+\gamma _{x},
\end{equation*}%
and by $x^{-3}$ produces%
\begin{equation*}
x^{-3}=-\varepsilon _{x}^{-1}\left( \gamma _{x}x^{-2}+\delta
_{x}x^{-1}-1\right) \text{;}
\end{equation*}%
that is, $x^{-1}$ satisfies a cubic polynomial over $\mathbb{D}$. More
generally, $x^{k}$ is a linear combination of $\left\{ 1,x,x^{-1}\right\} $
for all integers $k$. Therefore, the ring generated by $G$ is a $\mathbb{D}$%
-linear combination of monomials in $a^{i},b^{i}$ $\left( i=0,\pm 1\right) $%
. Conjugation of%
\begin{equation*}
\left( ab\right) ^{3}=\delta _{ab}\left( ab\right) ^{2}+\gamma _{ab}\left(
ab\right) +\varepsilon _{ab}
\end{equation*}%
by $a^{-1}$ produces%
\begin{equation*}
\left( ba\right) ^{3}=\delta _{ab}\left( ba\right) ^{2}+\gamma _{ab}\left(
ba\right) +\varepsilon _{ab}
\end{equation*}%
and inversion produces 
\begin{equation*}
\left( a^{-1}b^{-1}\right) ^{3}=-\varepsilon _{ab}^{-1}\left( \gamma
_{ab}\left( a^{-1}b^{-1}\right) ^{2}+\delta _{ab}\left( a^{-1}b^{-1}\right)
-1\right) \text{.}
\end{equation*}%
Similarly, the formula for $\left( a^{-1}b\right) ^{3}$ produces

\begin{equation*}
\left( ab^{-1}\right) ^{3}=-\varepsilon _{a^{-1}b}\left( \gamma
_{a^{-1}b}\left( ab^{-1}\right) ^{2}+\delta _{a^{-1}b}\left( ab^{-1}\right)
-1\right) \text{. }
\end{equation*}%
That is, every $a^{i}b^{j}$ $\left( i,j=\pm 1\right) $ satisfies a cubic
polynomial. On multiplying the formula%
\begin{equation*}
\left( ab\right) ^{3}=\delta _{ab}\left( ab\right) ^{2}+\gamma
_{ab}ab+\varepsilon _{ab}
\end{equation*}%
by $a^{-1}$ on the left and by $\left( ab\right) ^{-1}$ on the right, we
obtain%
\begin{equation*}
bab=\delta _{ab}b+\gamma _{ab}a^{-1}+\varepsilon _{ab}a^{-1}b^{-1}a^{-1}%
\text{.}
\end{equation*}%
On substituting in the expressions for $a^{-1},b^{-1}$ we obtain a formula
for $bab$ as a linear combination of $a^{i}b^{j}a^{k}$ $\left( i,j,k=0,\pm
1\right) $.

Similarly, from the formulas for $\left( a^{\varepsilon }b^{\delta }\right)
^{3}$ $\left( \varepsilon ,\delta =-1,1\right) $ we obtain $b^{m}a^{l}b^{m}$ 
$\left( l,m=1,2\right) $ as a linear combination of $a^{i}b^{j}a^{k}$ $%
\left( i,j,k=0,\pm 1\right) $. Therefore $b^{m}a^{l}b^{m+1}$ is a sum of
monomials $a^{i}b^{j}a^{k}b$ $\left( i,j,k=0,\pm 1\right) $, from which set
we can exclude $a^{i}ba^{k}b$ $\left( k=\pm 1\right) $.

We assert that a monomial of type $w=\alpha _{1}b^{j}\alpha _{2}b^{k}\alpha
_{3}b^{l}\alpha _{4}$ where $\alpha _{i}$ $=a^{\pm 1}$ $\left( 1\leq i\leq
4\right) $ and having $b$-syllable length $\left\vert j\right\vert
+\left\vert k\right\vert +\left\vert l\right\vert \leq 3$ is a sum of
monomials of type $\alpha _{1}b^{r}\alpha _{2}b^{s}\alpha _{3}$ with $%
r,s=\pm 1$ and $\left( r,s\right) \not=\left( 1,-1\right) $. \ From our
formulas, we may assume that $j\not=k\not=l$. Typically, $w=\alpha
_{1}bab^{-1}\alpha _{2}b\alpha _{3}$. Then,%
\begin{eqnarray*}
\alpha _{1}bab^{-1}\alpha _{2}b\alpha _{3} &=&\alpha _{1}\left(
bab^{-1}\right) \alpha _{2}b\alpha _{3}=\alpha _{1}\left( \sum
a^{i}b^{j}a^{k}b\right) \alpha _{2}b\alpha _{3}= \\
&=&\sum \alpha _{1}a^{i}.b^{j}a^{k}\left( b\alpha _{2}b\right) \alpha _{3} \\
&=&\sum \alpha _{1}^{\prime }b^{j}a^{k}\left( \sum \alpha _{2}^{\prime
}b^{j^{\prime }}\alpha _{2}^{\prime \prime }\right) \alpha _{3}=\sum \alpha
_{1}^{\prime }b^{j}a^{k^{\prime }}b^{j^{\prime \prime }}\alpha _{3}\text{.}
\end{eqnarray*}

For the general case, we argue by induction on $d$. Here, we need to
consider only monomials of the form $%
w_{1}a_{1}^{l_{1}}w_{2}a_{2}^{l_{2}}...w_{d}a_{d}^{l_{d}}$ where $l_{i}=\pm
1 $ and $w_{i}\in M_{d-1}$. The proof proceeds as before: $%
a_{d}^{m}wa_{d}^{m}$ $\left( m=\pm 1\right) $ is a sum of monomials $%
w^{i}a_{d}^{m}w^{k}$ $\left( i,k=0,1,2\right) $ and $a_{d}^{m}wa_{d}^{m+1}$
is a sum of monomials $w^{i}a_{d}^{m}w^{k}a_{d}$ from which we can exclude $%
w^{i}a_{d}w^{k}a_{d}$.

(ii) The assertion here is clear.

(iii) That $\left\vert M_{2}\right\vert \leq 39$ is also clear. For general $%
d\,$, the cardinality of $\left\vert M_{d}\right\vert $ $\ $is estimated as
follows%
\begin{eqnarray*}
\left\vert M_{s+1}\right\vert &\leq &\left\vert M_{s}\right\vert +\left\vert
M_{s}\right\vert ^{2}+\left\vert M_{s}\right\vert ^{3} \\
&\leq &\frac{\left\vert M_{s}\right\vert }{\left\vert M_{s}\right\vert -1}%
\left( \left\vert M_{s}\right\vert ^{3}-1\right)
\end{eqnarray*}%
and by induction on $d$, 
\begin{equation*}
\log _{3}\left\vert M_{d}\right\vert \leq 4.3^{d-2}\text{.}
\end{equation*}

\section{Quadratic Unipotent conditions (Proof of Theorem 2)}

(0) Let $H$ be a subgroup of a ring $S$ with unity and let $x\in H$ satisfy $%
u\left( x\right) ^{2}=0$. Then 
\begin{equation*}
u\left( x^{k}\right) =ku\left( x\right) \text{ for all integers }k\text{.}
\end{equation*}%
\ Suppose $x,y\in H$ satisfy 
\begin{equation*}
u\left( x\right) ^{2}=u\left( y\right) ^{2}=u\left( xy\right) ^{2}=0\text{.}
\end{equation*}%
Then%
\begin{eqnarray*}
u\left( \left( xy\right) ^{-1}\right) &=&-u\left( yx\right) =-\left( u\left(
y\right) +u\left( x\right) +u\left( y\right) u\left( x\right) \right) , \\
u\left( x^{-1}y^{-1}\right) &=&u\left( x^{-1}\right) +u\left( y^{-1}\right)
+u\left( x^{-1}\right) u\left( y^{-1}\right) \\
&=&-u\left( x\right) -u\left( y\right) +u\left( x\right) u\left( y\right) ,
\\
u\left( y\right) u\left( x\right) &=&-u\left( x\right) u\left( y\right) 
\text{;}
\end{eqnarray*}%
\begin{eqnarray*}
u\left( \left[ x,y\right] \right) &=&u\left( x^{-1}y^{-1}xy\right) \\
&=&u\left( x^{-1}\right) +u\left( y^{-1}\right) +u\left( x\right) +u\left(
y\right) \\
&&+u\left( x^{-1}\right) u\left( y^{-1}\right) +u\left( x^{-1}\right)
u\left( x\right) +u\left( x^{-1}\right) u\left( y\right) \\
&&+u\left( y^{-1}\right) u\left( x\right) +u\left( y^{-1}\right) u\left(
y\right) \\
&&+u\left( x\right) u\left( y\right) \\
&=&u\left( x\right) u\left( y\right) -u\left( y\right) u\left( x\right)
=2u\left( x\right) u\left( y\right) \text{;}
\end{eqnarray*}%
\begin{equation*}
u\left( \left[ x,y\right] \right) ^{2}=0\text{.}
\end{equation*}%
(i) In $A_{d}$, 
\begin{equation*}
a_{i}^{2}=2a_{i}-1,
\end{equation*}%
for all $i$ and 
\begin{eqnarray*}
u\left( a_{j}\right) u\left( a_{i}\right) &=&-u\left( a_{i}\right) u\left(
a_{j}\right) , \\
a_{j}a_{i} &=&-a_{i}a_{j}+2a_{i}+2a_{j}-2
\end{eqnarray*}%
for all $i,j$. Therefore, $\mathbf{A}_{d}$ is a $\mathbb{Z}$-linear
combination of $1$ together with the monomials $%
a_{i_{1}}a_{i_{2}}...a_{i_{s}}$ for $1\leq i_{1}<i_{2}<...<i_{s}\leq d$. We
note that $\mathbf{A}_{d}$ maps onto the group ring $\mathbb{Z}\frac{F}{%
F^{\prime }}$ and therefore $rank_{\mathbb{Z}}\left( \mathbf{A}_{d}\right)
=2^{d}$.

(ii) Given the map 
\begin{eqnarray*}
\varphi &:&a_{1}\rightarrow a_{1,d}=\left( 
\begin{array}{cc}
I_{2^{d-1}} & 0 \\ 
I_{2^{d-1}} & I_{2^{d-1}}%
\end{array}%
\right) , \\
a_{i} &\rightarrow &a_{i,d}=\left( 
\begin{array}{cc}
a_{i-1,d-1} & 0 \\ 
a_{i-1,d-1} & \left( a_{i-1,d-1}\right) ^{-1}%
\end{array}%
\right) \text{ }\left( 2\leq i\leq d\right) \text{,}
\end{eqnarray*}%
we calculate 
\begin{eqnarray*}
\left( a_{i,d}\right) ^{-1} &=&\left( 
\begin{array}{cc}
\left( a_{i-1,d-1}\right) ^{-1} & 0 \\ 
-a_{i-1,d-1} & a_{i-1,d-1}%
\end{array}%
\right) , \\
a_{i,d}+\left( a_{i,d}\right) ^{-1} &=&\left( 
\begin{array}{cc}
a_{i-1,d-1}+\left( a_{i-1,d-1}\right) ^{-1} & 0 \\ 
0 & a_{i-1,d-1}+\left( a_{i-1,d-1}\right) ^{-1}%
\end{array}%
\right) \\
&=&2I_{2^{d}}\text{.}
\end{eqnarray*}%
By induction on $d$, we have%
\begin{equation*}
\left( a_{i,d}-I_{2^{d}}\right) ^{2}=\left( 
\begin{array}{cc}
\left( a_{i-1,d-1}-I_{2^{d-1}}\right) ^{2} & 0 \\ 
a_{i-1,d-1}\left( a_{i-1,d-1}+\left( a_{i-1,d-1}\right)
^{-1}-2I_{2^{d-1}}\right) & \left( \left( a_{i-1,d-1}\right)
^{-1}-I_{2^{d-1}}\right) ^{2}%
\end{array}%
\right) =0\text{.}
\end{equation*}%
Furthermore,%
\begin{eqnarray*}
a_{i,d}a_{j,d} &=&\left( 
\begin{array}{cc}
a_{i-1,d-1} & 0 \\ 
a_{i-1,d-1} & \left( a_{i-1,d-1}\right) ^{-1}%
\end{array}%
\right) \left( 
\begin{array}{cc}
a_{j-1,d-1} & 0 \\ 
a_{j-1,d-1} & \left( a_{j-1,d-1}\right) ^{-1}%
\end{array}%
\right) \\
&=&\left( 
\begin{array}{cc}
a_{i-1,d-1}a_{j-1,d-1} & 0 \\ 
\left( a_{i-1,d-1}+\left( a_{i-1,d-1}\right) ^{-1}\right) a_{j-1,d-1} & 
\left( a_{i-1,d-1}\right) ^{-1}\left( a_{j-1,d-1}\right) ^{-1}%
\end{array}%
\right) \\
&=&\left( 
\begin{array}{cc}
a_{i-1,d-1}a_{j-1,d-1} & 0 \\ 
2a_{j-1,d-1} & \left( a_{i-1,d-1}\right) ^{-1}\left( a_{j-1,d-1}\right) ^{-1}%
\end{array}%
\right) \\
a_{i,d}a_{j,d}-I_{2^{d}} &=&\left( 
\begin{array}{cc}
\left( a_{i-1,d-1}a_{j-1,d-1}-I_{2^{d-1}}\right) & 0 \\ 
2a_{j-1,d-1} & -a_{j-1,d-1}a_{i-1,d-1}+I_{2^{d-1}}%
\end{array}%
\right)
\end{eqnarray*}%
Therefore,%
\begin{eqnarray*}
\left( a_{i,d}a_{j,d}-I_{2^{d}}\right) ^{2} &=&\left( 
\begin{array}{cc}
\left( a_{i-1,d-1}a_{j-1,d-1}-I_{2^{d-1}}\right) ^{2} & 0 \\ 
\zeta & \left( -a_{j-1,d-1}a_{i-1,d-1}+I_{2^{d-1}}\right) ^{2}%
\end{array}%
\right) , \\
\zeta &=&2a_{j-1,d-1}\left( a_{i-1,d-1}a_{j-1,d-1}-I_{2^{d-1}}\right)
+\left( -a_{j-1,d-1}a_{i-1,d-1}+I_{2^{d-1}}\right) 2a_{j-1,d-1} \\
&=&0\text{,} \\
\left( a_{i,d}a_{j,d}-I_{2^{d}}\right) ^{2} &=&0\text{.}
\end{eqnarray*}%
Thus $\varphi $ extends to a ring homomorphism $\mathbf{A}_{d}\rightarrow
M(2^{d},\mathbb{Z)}$.

Observe that 
\begin{equation*}
\mathbf{A}_{d}=\mathbb{Z}\left\langle a_{1},...,a_{d-1}\right\rangle +%
\mathbb{Z}\left\langle a_{1},...,a_{d-1}\right\rangle a_{d}
\end{equation*}%
which will be shown to be a direct sum. We do this first in the
representation $\left( \mathbf{A}_{d}\right) ^{\varphi }$.

\begin{lemma}
Let $T_{0}=\mathbb{Z}$ and for $s\geq 1,T_{s}=\left( \mathbf{A}_{s}\right)
^{\varphi }$ and $U_{s+1}=T_{s}$ $\otimes I_{2^{s+1}}$. Then, $U_{s+1}\cong
T_{s}$ and 
\begin{equation*}
T_{d}=U_{d}\oplus U_{d}\alpha _{d,d}=U_{d}\oplus U_{d}\left( \alpha
_{d,d}\right) ^{-1}\text{.}
\end{equation*}
\end{lemma}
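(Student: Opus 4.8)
The plan is to prove the Lemma by induction on $d=s+1$, carrying along as part of the inductive statement that $\varphi$ restricts to a ring isomorphism $\mathbf{A}_{d}\to T_{d}$, so that $T_{d}$ is free of rank $2^{d}$. The base $d=1$ is immediate: $T_{1}=\mathbb{Z}I_{2}+\mathbb{Z}a_{1,1}$ with $U_{1}=T_{0}\otimes I_{2}=\mathbb{Z}I_{2}$. For the step, set $U_{d}:=\varphi\bigl(\mathbb{Z}\langle a_{1},\dots ,a_{d-1}\rangle\bigr)=\mathbb{Z}\langle a_{1,d},\dots ,a_{d-1,d}\rangle\subseteq T_{d}$. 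The matrices $a_{1,d},\dots ,a_{d-1,d}$ satisfy exactly the relations defining $\mathbf{A}_{d-1}$, namely $(a_{i,d}-I)^{2}=0$ and $(a_{i,d}a_{j,d}-I)^{2}=0$ for $i<j\le d-1$, both already computed above; hence $y_{i}\mapsto a_{i,d}$ gives a surjection $\mathbf{A}_{d-1}\twoheadrightarrow U_{d}$ and in particular the rank of $U_{d}$ is at most $2^{d-1}$. Applying $\varphi$ to the displayed identity $\mathbf{A}_{d}=\mathbb{Z}\langle a_{1},\dots ,a_{d-1}\rangle +\mathbb{Z}\langle a_{1},\dots ,a_{d-1}\rangle a_{d}$ yields the spanning relation $T_{d}=U_{d}+U_{d}\alpha_{d,d}$, where $\alpha_{d,d}=a_{d,d}=\varphi(a_{d})$.

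The crux is an independent computation of the rank of $T_{d}$ inside $M(2^{d},\mathbb{Z})$, not presupposing faithfulness. Every element of $T_{d}$ is lower block triangular, so the top-left block map $\pi_{1}\colon T_{d}\to M(2^{d-1},\mathbb{Z})$ is a ring homomorphism; by the recursion $\pi_{1}(a_{1,d})=I$ and $\pi_{1}(a_{i,d})=a_{i-1,d-1}$, whence $\pi_{1}(T_{d})=T_{d-1}$, of rank $2^{d-1}$ by induction. On the other hand $u(a_{1,d})=a_{1,d}-I_{2^{d}}=\left(\begin{array}{cc}0&0\\ I_{2^{d-1}}&0\end{array}\right)$, so $u(a_{1,d})\,T_{d}=\bigl\{\left(\begin{array}{cc}0&0\\ A&0\end{array}\right):A\in\pi_{1}(T_{d})\bigr\}$ is a left ideal contained in $\ker\pi_{1}$ and of rank $2^{d-1}$. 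Rank–nullity over $\mathbb{Q}$ then gives that the rank of $T_{d}$ equals the rank of $\ker\pi_{1}$ plus the rank of $\pi_{1}(T_{d})$, which is at least $2^{d-1}+2^{d-1}=2^{d}$. Since $T_{d}$ is a homomorphic image of $\mathbf{A}_{d}$, its rank is also at most $2^{d}$; hence it is exactly $2^{d}$, $\varphi$ is faithful, and the surjection $\mathbf{A}_{d-1}\to U_{d}$ between free modules of rank $2^{d-1}$ is an isomorphism. Thus $U_{d}\cong\mathbf{A}_{d-1}\cong T_{d-1}$, which is the assertion $U_{s+1}\cong T_{s}$.

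With the rank of $U_{d}$ equal to $2^{d-1}$, and right multiplication by the rationally invertible $\alpha_{d,d}$ preserving rank, both summands of $T_{d}=U_{d}+U_{d}\alpha_{d,d}$ have rank $2^{d-1}$. From the identity relating the rank of a sum to the ranks of the summands and their intersection, $2^{d}=2^{d-1}+2^{d-1}-\mathrm{rank}(U_{d}\cap U_{d}\alpha_{d,d})$, so $U_{d}\cap U_{d}\alpha_{d,d}$ has rank $0$; as $T_{d}$ is torsion-free the intersection is $0$ and the sum is direct. For the second decomposition, the identity $\alpha_{d,d}+(\alpha_{d,d})^{-1}=2I$ proved above gives $(\alpha_{d,d})^{-1}=2I-\alpha_{d,d}\in T_{d}$, so $U_{d}(\alpha_{d,d})^{-1}=U_{d}(2I-\alpha_{d,d})$ and therefore $U_{d}+U_{d}(\alpha_{d,d})^{-1}=U_{d}+U_{d}\alpha_{d,d}=T_{d}$; the identical rank count shows this sum is direct as well.

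The step I expect to require the most care is precisely the lower bound that the rank of $T_{d}$ is at least $2^{d}$, carried out directly in the matrix ring, since it is what breaks the potential circularity: it simultaneously delivers faithfulness of $\varphi$, the isomorphism $U_{d}\cong T_{d-1}$, and the directness of both decompositions. The block-triangular homomorphism $\pi_{1}$ together with the square-zero left ideal $u(a_{1,d})\,T_{d}$ are exactly the tools that make this bound transparent, and checking that $u(a_{1,d})\,T_{d}$ has full rank $2^{d-1}$ reduces to the surjectivity of $\pi_{1}$, which is immediate from the defining recursion. Everything else is bookkeeping with the $2\times2$ block matrices of the type already handled in the verification that $\varphi$ is a homomorphism.
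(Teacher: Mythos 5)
Your proof is correct, but it follows a genuinely different route from the paper's. The paper takes the direct-sum statement itself as the induction: it writes $X,Y\in U_{d}$ and $a_{d,d}$ in $2\times 2$ block form, expands $\lambda X+\mu Ya_{d,d}=0$ into three block equations, and recognizes each as an instance of the lemma at level $d-1$ --- the top-left blocks use the decomposition relative to $\alpha_{d-1,d-1}$, the bottom-right blocks the one relative to $\left(\alpha_{d-1,d-1}\right)^{-1}$ (which is precisely why the lemma states both decompositions), and the bottom-left equation then gives $\mu W_{1}=0$; injectivity of $\varphi$ and $\mathrm{rank}_{\mathbb{Z}}\,\mathbf{A}_{d}=2^{d}$ are deduced only afterwards, in the text following the lemma. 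You instead strengthen the induction to carry faithfulness and the rank, and replace the block computation by a rank count: $\pi_{1}(T_{d})=T_{d-1}$, the submodule $u(a_{1,d})T_{d}\subseteq\ker\pi_{1}$ has rank $2^{d-1}$ (it is a right ideal, incidentally, not a left ideal, though you only need that it is a $\mathbb{Z}$-submodule), so $\mathrm{rank}\,T_{d}\geq 2^{d}$, the $2^{d}$ spanning monomials of $\mathbf{A}_{d}$ give the reverse bound, and directness of both decompositions drops out of the rank formula for sums --- with no need to induct on the $\left(\alpha_{d,d}\right)^{-1}$ version at all. What your route buys: Theorem 2(i) (the monomorphism and the rank) becomes a byproduct of the lemma's induction rather than a consequence of it, and you avoid the bookkeeping with the bar blocks $\overline{X_{1}}$ and the bottom-left entries; the cost is reliance on standard but unstated module facts, chiefly that a generating set of cardinality equal to the rank of a free $\mathbb{Z}$-module is a basis (this is what actually justifies your compressed step ``hence $\varphi$ is faithful'') and the formula $\mathrm{rank}(M+N)=\mathrm{rank}\,M+\mathrm{rank}\,N-\mathrm{rank}(M\cap N)$. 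One ordering slip worth fixing: in your second paragraph you describe $\mathbf{A}_{d-1}\to U_{d}$ as a surjection between free modules of rank $2^{d-1}$ when at that point you only have $\mathrm{rank}\,U_{d}\leq 2^{d-1}$; the lower bound comes from $2^{d}=\mathrm{rank}\,T_{d}\leq 2\,\mathrm{rank}\,U_{d}$, using the rational invertibility of $\alpha_{d,d}$, which you invoke only in the third paragraph, so that inequality should be moved up.
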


\begin{proof}
Let $d=1$. Then, $T_{1}=\mathbb{Z}\left( 
\begin{array}{cc}
1 & 0 \\ 
0 & 1%
\end{array}%
\right) \oplus \mathbb{Z}\left( 
\begin{array}{cc}
1 & 0 \\ 
1 & 1%
\end{array}%
\right) $. In general we have 
\begin{equation*}
T_{d}=U_{d}+U_{d}a_{d,d}
\end{equation*}%
and since $\left( \alpha _{d,d}\right) ^{-1}=2-\alpha _{d,d}$,%
\begin{equation*}
T_{d}=U_{d}+U_{d}\left( a_{d,d}\right) ^{-1}\text{.}
\end{equation*}%
Let $X,Y\in U_{d}$ \ and $\lambda ,\mu \in \mathbb{Z}$. Then, using the forms%
\begin{equation*}
X=\left( 
\begin{array}{cc}
X_{1} & 0 \\ 
V_{1} & \overline{X_{1}}%
\end{array}%
\right) ,Y=\left( 
\begin{array}{cc}
Y_{1} & 0 \\ 
W_{1} & \overline{Y_{1}}%
\end{array}%
\right) ,a_{d,d}=\left( 
\begin{array}{cc}
a_{d-1,d-1} & 0 \\ 
a_{d-1,d-1} & \left( a_{d-1,d-1}\right) ^{-1}%
\end{array}%
\right) \text{,}
\end{equation*}%
we expand%
\begin{equation*}
\lambda X+\mu Ya_{d,d}=\left( 
\begin{array}{cc}
\lambda X_{1}+\mu Y_{1}a_{d-1,d-1} & 0 \\ 
\lambda V_{1}+\mu \left( W_{1}+\overline{Y_{1}}\right) a_{d-1,d-1} & \lambda 
\overline{X_{1}}+\mu \overline{Y_{1}}\left( a_{d-1,d-1}\right) ^{-1}%
\end{array}%
\right) \text{. }
\end{equation*}%
Suppose%
\begin{equation*}
\lambda X+\mu Ya_{d,d}=0\text{;}
\end{equation*}%
we want to show $\lambda X=\mu Y=0$. Now 
\begin{eqnarray*}
\lambda X_{1}+\mu Y_{1}a_{d-1,d-1} &=&0,\text{ }\lambda \overline{X_{1}}+\mu 
\overline{Y_{1}}\left( a_{d-1,d-1}\right) ^{-1}=0, \\
\lambda V_{1}+\mu \left( W_{1}+\overline{Y_{1}}\right) a_{d-1,d-1} &=&0
\end{eqnarray*}%
and by induction,%
\begin{eqnarray*}
\lambda X_{1} &=&\mu Y_{1}=\lambda \overline{X_{1}}=\mu \overline{Y_{1}}=0%
\text{,} \\
\lambda V_{1} &=&\mu \left( W_{1}+\overline{Y_{1}}\right) =0\text{,} \\
\mu W_{1} &=&0\text{.}
\end{eqnarray*}
\end{proof}

Thus 
\begin{eqnarray*}
\mathbf{A}_{d} &=&\mathbb{Z}\left\langle a_{1},...,a_{d-1}\right\rangle +%
\mathbb{Z}\left\langle a_{1},...,a_{d-1}\right\rangle a_{d}\text{,} \\
\left( \mathbf{A}_{d}\right) ^{\varphi } &=&\left( \mathbb{Z}\left\langle
a_{1},...,a_{d-1}\right\rangle \right) ^{\varphi }+\left( \mathbb{Z}%
\left\langle a_{1},...,a_{d-1}\right\rangle \right) ^{\varphi }a_{d,d} \\
&=&U_{d}\oplus U_{d}\alpha _{d,d}\text{.}
\end{eqnarray*}%
By induction, the ring $\mathbf{A}_{d-1}$ and the subring $\mathbb{Z}%
\left\langle a_{1},...,a_{d-1}\right\rangle $ of $\mathbf{A}_{d}$ are free $%
\mathbb{Z}$-modules of rank $2^{d-1}$ isomorphic to $U_{d}$. Therefore, from
the above lemma, we reach $\mathbf{A}_{d}$ is a free $\mathbb{Z}$-module of
rank $2^{d}$ and conclude $\varphi $ is a monomorphism.

(iii) The ring $B_{d}$ is a free $\mathbb{Z}$-module freely generated by 
\begin{equation*}
\left\{ u\left( a_{j_{1}}\right) ...u\left( a_{j_{t}}\right) \mid 1\leq
j_{1}<...<j_{t}\leq d\right\}
\end{equation*}%
and $\left( B_{d}\right) ^{d+1}=0$, 
\begin{equation*}
\left( B_{d}\right) ^{d}=\mathbb{Z}u\left( a_{1}\right) ...u\left(
a_{d}\right) \not=0\text{.}
\end{equation*}

(iv) Since $u\left( \left[ a_{i},a_{j}\right] \right) ^{2}=0,u\left(
a_{l}\right) ^{2}=0$, it follows that for all $i,j,l$, 
\begin{eqnarray*}
u\left( \left[ \left[ a_{i},a_{j}\right] ,a_{l}\right] \right) &=&u\left( %
\left[ a_{i},a_{j}\right] \right) u\left( a_{l}\right) -u\left( a_{l}\right)
u\left( \left[ a_{i},a_{j}\right] \right) \\
&=&2u\left( a_{i}\right) u\left( a_{j}\right) u\left( a_{l}\right) -2u\left(
a_{l}\right) u\left( a_{i}\right) u\left( a_{j}\right) \\
&=&2u\left( a_{i}\right) u\left( a_{j}\right) u\left( a_{l}\right) -2u\left(
a_{i}\right) u\left( a_{j}\right) u\left( a_{l}\right) \\
&=&0\text{,} \\
\left[ a_{i},a_{j},a_{l}\right] &=&e\text{.}
\end{eqnarray*}%
Hence, $\gamma _{3}\left( G\right) =\left\{ e\right\} $. As we noted before $%
\mathbf{A}_{d}$ maps onto $\mathbb{Z}\frac{F}{F^{\prime }}$ and therefore $%
\frac{G_{d}}{G_{d}^{\prime }}$ is a free abelian group of rank $d$. We need
to show that $G_{d}^{\prime }$ is free abelian of rank $\binom{d}{2}$.
Clearly $G_{d}^{\prime }$ is an abelian group generated by $\left[
a_{i},a_{j}\right] $ for $1\leq i<j\leq d$. Since $\left[ a_{i},a_{j}\right]
=1+2u\left( a_{i}\right) u\left( a_{j}\right) $ and the set $\left\{ u\left(
a_{i}\right) u\left( a_{j}\right) \mid 1\leq i<j\leq d\right\} $ is $\mathbb{%
Z}$-linearly independent, it follows $G_{d}$ is a free $d$-generated
nilpotent group of class $2$.

\ \ \ \ 

\section{Cubic Unipotent Conditions}

Let $R$ be a ring with unity and let $G=\left\langle a_{i}~\left( 1\leq
i\leq d\right) \right\rangle $ be a multiplicative subgroup of $R$. Then for 
$g_{1},g_{2},....,g_{n}\in G$, 
\begin{equation*}
u(g_{1}g_{2}....g_{n})=\sum\limits_{1\leq i_{1}\,<i_{2}<...<i_{s}\leq
n}u(g_{i_{1}})u(g_{i_{2}})...u(g_{i_{s}})\text{.}
\end{equation*}

Suppose $\left( g-1\right) ^{3}=0$. Then, for $m\geq 1$,

\begin{eqnarray*}
g^{m} &=&\left( 1+u\left( g\right) \right) ^{m}=1+mu\left( g\right) +\binom{m%
}{2}u\left( g\right) ^{2}, \\
u\left( g^{m}\right) &=&g^{m}-1=mu\left( g\right) +\binom{m}{2}u\left(
g\right) ^{2}, \\
u\left( g^{m}\right) ^{2} &=&m^{2}u\left( g\right) ^{2},u\left( g^{m}\right)
^{3}=0\text{.}
\end{eqnarray*}%
For negative exponents, we use%
\begin{equation*}
g^{-1}=g^{2}-3g+3,
\end{equation*}%
\begin{eqnarray*}
u(g^{-1}) &=&u(g^{2})-3u(g)=-u(g)+u(g)^{2}, \\
u(g^{-1})^{2} &=&u(g)^{2}\text{.}
\end{eqnarray*}%
Thus, for $m\geq 1,$%
\begin{eqnarray*}
u(g^{-m}) &=&mu(g^{-1})+\binom{m}{2}u(g^{-1})^{2}=m\left(
-u(g)+u(g)^{2}\right) +\binom{m}{2}u(g)^{2} \\
&=&-mu(g)+\binom{m+1}{2}u(g)^{2}\text{.}
\end{eqnarray*}%
Using $\binom{-i+1}{2}=\frac{\left( -i+1\right) \left( -i\right) }{2}=\frac{%
\left( i-1\right) i}{2}=\binom{i}{2}$, we conclude

\begin{equation*}
u(g^{m})=mu(g)+\binom{m}{2}u(g)^{2}
\end{equation*}%
for all $m$.

\subsection{The $2$-generated case}

We analyze directly the ring 
\begin{equation*}
\mathbf{A}=\frac{\mathbb{D}F_{2}}{\left\langle \left( x-1\right) ^{3}\mid
x\in P\right\rangle },P=\left\{
y_{1},y_{2},y_{1}y_{2},y_{1}^{-1}y_{2},y_{1}^{2}y_{2},y_{1}y_{2}^{2},\left[
y_{1},y_{2}\right] \right\} \text{.}
\end{equation*}

Note that $\left( y_{1}^{-1}y_{2}-1\right) ^{3}=0$ implies $\left(
y_{1}y_{2}^{-1}-1\right) ^{3}=0$. We denote $u\left( a\right) =a-1,u\left(
b\right) =b-1$ in $\mathbf{B}$ by $U,V$.

\subsubsection{ Proof of Theorem 3, (i), (ii), (iii)}

We start with the relations $u\left( a\right) ^{3}=u\left( b\right)
^{3}=u\left( ab\right) ^{3}=u(a^{-1}b)^{3}=u(a^{2}b)^{3}=u(ab^{2})^{3}=0$
which translate to $U\ ^{3}=V^{3}=0,(UV+U+V)^{3}=0$. The interchange $%
a\leftrightarrow b$ translates to $U\leftrightarrow V$ , the substitution $%
ab\rightarrow a^{-1}b$ translates to $U$ $\rightarrow U^{2}-U,V\rightarrow V$
and similarly, $ab\rightarrow a^{2}b$ translates to $U$ $\rightarrow
U^{2}+2U,$ $V\rightarrow V$.

We also order $U<V<U^{2}<V^{2}$. Monomials are ordered decreasingly as
follows: by syllable length, then by total length, then by total $V$ length
and finally, by reverse lexicographical ordering.

In the sequence of manipulations below we use the following notation: given $%
m,p\in \mathbf{B}$ where $m$ is a monomial and where the monomial summands
of $p$ are of lesser order than $p$, a relation (or, rule of substituting $m$
by $p$) is of the form $m=p$. Given a monomial $q\in \mathbf{B}$ written as
a sum of monomials $q=\cdots +c\ast m+\cdots $ $\left( c\in \mathbb{Z}%
\right) $ then $q\leftarrow m=p$ (abbreviated as $q\leftarrow m)$ indicates
the substitution of $m$ by $p$ in the expression for $q$. If $q=\cdots
+c\ast m^{\prime }\ast m\ast m^{\prime \prime }+\cdots $ where one of the
monomials $m^{\prime },m^{\prime \prime }$ is non-empty, then $%
q\leftleftarrows m=p$ (abbreviated as $q\leftleftarrows m)$ indicates the
substitution of $m$ by $p$ in $q$. Composing substitutions is done from left
to right and we write $q\leftarrow m_{1},m_{2}$ for $\left( q\leftarrow
m_{1}\right) \leftarrow m_{2}$.

\begin{enumerate}
\item := $(UV + U + V)^3=0$ \label{eq:UVUVUV}\newline
$UVUVUV \;=\; - VUVUV - UVUVU - UVUV^2 - UV^2UV - UVU^2V - U^2VUV - 2UVUV -
VUVU - VUV^2 - V^2UV - VU^2V - UV^2U - UVU^2 - U^2VU - VUV - UVU - U^2V^2 -
UV^2 - V^2U - U^2V - VU^2 $;\newline
\newline

\item := 
\textcolor{red}{$ (1) \times V^2 \leftleftarrows (1) \leftarrow (1)
$} \label{eq:UVUVU}\newline
$UVUVU \;=\; - UVUV - VUVU - UV^2U - UVU^2 - U^2VU - VUV - UVU - UV^2 - V^2U
- U^2V - VU^2 $;\newline
\newline

\item := 
\textcolor{red}{$ (2) \times U^2  \leftleftarrows (2) \leftarrow
(2)$} \label{eq:UVUV}\newline
$UVUV \;=\; - VUV - UVU + V^2U^2 - UV^2 - V^2U - U^2V - VU^2 $;\newline
\newline

\item := 
\textcolor{red}{$ U^2 \times (3)  \leftleftarrows (3) \leftarrow
(3)$} \label{eq:U2V2U2}\newline
$U^2V^2U^2 \;=\; UV^2U^2 + U^2V^2U + U^2VU^2 - UV^2U - UVU^2 - U^2VU + VUV +
UVU - U^2V^2 - V^2U^2 + UV^2 + V^2U + U^2V + VU^2 $;\newline
\newline

\item := \textcolor{red}{$ (4) \leftarrow  (U \mapsto U^2 -U)  $} \label%
{eq:VU2V}\newline
$VU^2V \;=\; UV^2U + VUV - UVU + UV^2 + V^2U - U^2V - VU^2 $;\newline
\newline

\item := \textcolor{red}{$ (4) \leftarrow  (V \mapsto V^2 -V)  $} \label%
{eq:V2UV2}\newline
$V^2UV^2 \;=\; U^2VU^2 + VUV^2 + V^2UV - UVU^2 - U^2VU - VUV + UVU - UV^2 -
V^2U + U^2V + VU^2 $;\newline
\newline

\item := 
\textcolor{red}{$ (4) \leftarrow  (V \mapsto V^2 +2V) \leftarrow
(6) \leftarrow (4) \leftarrow * \frac{1}{3}$} \label{eq:VUV2}\newline
$VUV^{2}\;=\;-V^{2}UV$;\newline
\newline

\item := \textcolor{red}{$ (7) \times V $ } \label{eq:V2UV2a}\newline
$V^{2}UV^{2}\;=\;0$;\newline
\newline

\item := \textcolor{red}{$ (7) \leftarrow (U \leftrightarrow V)  $} \label%
{eq:UVU2}\newline
$UVU^2 \;=\; - U^2VU $;\newline
\newline

\item := \textcolor{red}{$ (8) \leftarrow (U \leftrightarrow V)  $} \label%
{eq:U2VU2}\newline
$U^{2}VU^{2}\;=\;0$;\newline
\newline

\item := 
\textcolor{red}{$ (6)   \leftarrow  (10) \leftarrow (9) \leftarrow 
\leftarrow (8) \leftarrow (7) $} \label{eq:VUV}\newline
$VUV \;=\; UVU - UV^2 - V^2U + U^2V + VU^2 $;\newline
\newline

\item := \textcolor{red}{$ (5)  \leftarrow (11)$} \label{eq:VU2Va}\newline
$VU^{2}V\;=\;UV^{2}U$;\newline
\newline

\item := 
\textcolor{red}{$ (11) \times V \leftarrow (12) \leftarrow (7)
\leftarrow (3) \leftarrow (11) $} \label{eq:UV2U}\newline
$UV^{2}U\;=\;2UVU-U^{2}V^{2}-V^{2}U^{2}+2U^{2}V+2VU^{2}$.\newline
\newline

\item := 
\textcolor{red}{$ (13) \times U \leftleftarrows (13) \leftarrow (9)
$} \label{eq:UV2U2}\newline
$UV^{2}U^{2}=-U^{2}V^{2}U$ \newline

\item := \textcolor{red}{$ U \times (14)  $}\newline
$U^{2}V^{2}U^{2}=0$.
\end{enumerate}

Further similar manipulations (see \cite{GOS}) produce%
\begin{equation*}
\left( UV\right) ^{3}=\left( UV\right) ^{2}U^{2}=U^{2}\left( VU\right)
^{2}=\left( UV^{2}\right) ^{2}=\left( U^{2}V\right) ^{2}=\left( UVU\right)
^{2}=0\text{;}
\end{equation*}%
\begin{eqnarray*}
VUVU^{2}V &=&UV^{2}UVU=UVU^{2}V^{2}=U^{2}V^{2}UV=V^{2}UVU^{2}=VU^{2}V^{2}U%
\text{;} \\
V^{2}U^{2}VU
&=&V^{2}UVU^{2}=VUV^{2}U^{2}=UV^{2}U^{2}V=U^{2}VUV^{2}=UVUV^{2}U=VU^{2}VUV=-VU^{2}V^{2}U%
\text{.}
\end{eqnarray*}%
With these relations, it is directly verifiable that monomials of length $7$
are null.

\begin{lemma}
Let $c_{2}=\left[ a,b\right] $ and $C_{2}=u\left( c_{2}\right) =c_{2}-1$.
Then $C_{2}^{3}=6UV^{2}U^{2}V=0$.
\end{lemma}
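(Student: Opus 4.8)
The plan is to compute $C_2^3$ in two independent ways and compare. On the one hand, since $c_2=[y_1,y_2]\in P$, the defining relations of $\mathbf{A}$ give immediately $C_2^3=u(c_2)^3=0$. On the other hand I will reduce $C_2^3$ against the relations already derived and obtain the value $6\,UV^2U^2V$; comparing the two and using that $6$ is invertible in $\mathbb{D}=\mathbb{Z}[\tfrac{1}{6}]$ then yields the new relation $UV^2U^2V=0$.

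First I would determine $C_2$ modulo high degree. Writing $c_2=a^{-1}b^{-1}ab$ and applying $u(g_1\cdots g_n)=\sum_{i_1<\cdots<i_s}u(g_{i_1})\cdots u(g_{i_s})$ with $u(a^{-1})=-U+U^2$ and $u(b^{-1})=-V+V^2$, the degree-two part of $C_2$ collects to $UV-VU$, every other summand having degree at least $3$. Cubing, the degree-six part of $C_2^3$ is exactly $(UV-VU)^3$, while every remaining summand involves a factor of degree $\ge 3$ and hence has total degree $\ge 7$; all such monomials vanish by the already-established nullity of length-seven monomials. Thus $C_2^3=(UV-VU)^3$.

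The core of the argument is the reduction of this cube. Expanding,
\[
(UV-VU)^3=UVUVUV-UVUV^2U-UV^2U^2V+UV^2UVU-VU^2VUV+VU^2V^2U+VUVU^2V-VUVUVU .
\]
Six of these eight length-six monomials belong to the two displayed families of length-six relations and are thereby identified, up to sign, with $UV^2U^2V$; the two fully alternating monomials $UVUVUV$ and $VUVUVU$ are reduced through the relation $(UV+U+V)^3=0$ and its image under the interchange $U\leftrightarrow V$. Summing the resulting coefficients produces $(UV-VU)^3=6\,UV^2U^2V$. The main obstacle is precisely this bookkeeping: tracking signs consistently across the two families and verifying that the lower-degree remainders generated when $UVUVUV$ and $VUVUVU$ are reduced cancel, leaving only the degree-six term. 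This is the same style of manipulation as the numbered reductions above, with further details in \cite{GOS}.

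With $C_2^3=6\,UV^2U^2V$ in hand, the relation $C_2^3=0$ coming from $[y_1,y_2]\in P$ forces $6\,UV^2U^2V=0$, and invertibility of $6$ gives $UV^2U^2V=0$, as claimed.
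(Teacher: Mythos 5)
Your proposal is correct and follows essentially the same route as the paper: the paper likewise expands $C_{2}$ (via $C_{2}=(U^{2}-U+1)(V^{2}-V+1)(U+1)(V+1)-1$), discards the null length-seven monomials, arrives at exactly your eight degree-six monomials, and reduces them with the displayed length-six families to get the coefficient $6$; the vanishing then comes, as you say, from the defining relation $u([a,b])^{3}=0$ together with invertibility of $6$ in $\mathbb{Z}[\tfrac{1}{6}]$. One remark: the step you call the main obstacle is actually a non-issue, because $UVUVUV=(UV)^{3}=0$ is itself the first of the displayed relations (and $VUVUVU=(VU)^{3}=0$ is its image under $U\leftrightarrow V$), so the two alternating monomials vanish outright with no lower-degree remainders, and there is no need to re-reduce them through $(UV+U+V)^{3}=0$ or to verify any cancellation.
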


\begin{proof}
Since for a general element $g$ of the group, $g^{-1}=u\left( g\right)
^{2}-u\left( g\right) +1$, 
\begin{equation*}
C_{2}=(U^{2}-U+1)(V^{2}-V+1)(U+1)(V+1)-1\text{.}
\end{equation*}%
On expanding the above expression and eliminating monomials of length $7$ we
get 
\begin{eqnarray*}
C_{2}^{3} &=&\left( UV\right) ^{3}-\left( VU\right) ^{3} \\
&&+UV^{2}UVU-UVUV^{2}U \\
&&+VUVU^{2}V-VU^{2}VUV \\
&&+VU^{2}V^{2}U-UV^{2}U^{2}V\text{.}
\end{eqnarray*}%
Then, with further easy substitutions from the relations listed above, we
obtain 
\begin{equation*}
C_{2}^{3}=6U^{2}V^{2}UV\text{.}
\end{equation*}
\end{proof}

\subsubsection{Table of relations for the augmentation ideal $\mathbf{B}$}

\begin{eqnarray}
VUV &=&UVU-UV^{2}+U^{2}V-V^{2}U+VU^{2}, \\
\left( UV\right) ^{2} &=&-2UVU-2U^{2}V+V^{2}U^{2}-2VU^{2}, \\
\left( VU\right) ^{2} &=&-2UVU+U^{2}V^{2}-2U^{2}V-2VU^{2}, \\
\left( VU\right) ^{2}-\left( UV\right) ^{2} &=&U^{2}V^{2}-V^{2}U^{2}, \\
UV^{2}U &=&2\left( UVU+U^{2}V+VU^{2}\right) -U^{2}V^{2}-V^{2}U^{2}, \\
\left( UV\right) ^{2}+UV^{2}U &=&-U^{2}V^{2}, \\
VU^{2}V &=&UV^{2}U, \\
UVU^{2} &=&-U^{2}VU, \\
VUV^{2} &=&-V^{2}UV, \\
U^{2}VU^{2} &=&0,V^{2}UV^{2}=0, \\
UVUVU &=&0, \\
VU^{2}V^{2}U &=&U^{2}VU^{2}V=0\text{.} \\
V^{2}UVU &=&UV^{2}UV=-V^{2}U^{2}V, \\
VUV^{2}U &=&UVUV^{2}=V^{2}U^{2}V, \\
VU^{2}VU &=&U^{2}VUV=-U^{2}V^{2}U, \\
VUVU^{2} &=&UVU^{2}V=U^{2}V^{2}U\text{.}
\end{eqnarray}

Analysis of the interdependence of the relations in $\mathbf{B}$ produces
the following (see, \cite{GOS}),

\begin{lemma}
\bigskip The ring $\mathbf{B}$ is defined over $\mathbb{D}$ by its
generators $U,V$ with relations%
\begin{eqnarray*}
VUV &=&UVU-UV^{2}+U^{2}V-V^{2}U+VU^{2}, \\
UV^{2}U &=&2\left( UVU+U^{2}V+VU^{2}\right) -U^{2}V^{2}-V^{2}U^{2}, \\
VU^{2}V &=&UV^{2}U, \\
UVU^{2} &=&-U^{2}VU, \\
VU^{2}V^{2}U &=&U^{2}VU^{2}V=0\text{.}
\end{eqnarray*}
\end{lemma}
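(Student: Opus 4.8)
The plan is to realize $\mathbf{B}$ as a quotient of the free non-unital associative $\mathbb{D}$-algebra $\mathbb{D}\langle U,V\rangle^{+}$ and then to check, by a noncommutative rewriting (Gröbner) argument, that the five displayed relations, together with the cubic relations $U^{3}=V^{3}=0$, already generate the entire relation ideal. Let $J$ be the two-sided ideal generated by $U^{3},V^{3}$ and the five relations, and set $R=\mathbb{D}\langle U,V\rangle^{+}/J$. Every one of these relations holds in $\mathbf{B}$ (the cubic ones by hypothesis, the remaining ones having been derived in items (1)--(15) and the relation table (4.1)--(4.16)), so there is a canonical surjection $\pi\colon R\twoheadrightarrow\mathbf{B}$. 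It then suffices to show $\pi$ is injective, which I would do by matching $\mathbb{D}$-ranks from above and below.

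First I would establish the upper bound $\operatorname{rank}_{\mathbb{D}}R\le 17$. Orient each relation $m=p$ into a rewriting rule that replaces its leading monomial by the smaller expression $p$, relative to the order already fixed (syllable length, then total length, then total $V$-length, then reverse lexicographic with $U<V<U^{2}<V^{2}$); the leading monomials are $U^{3},V^{3},VUV,UV^{2}U,VU^{2}V,UVU^{2}$ and the two length-six words $VU^{2}V^{2}U,\,U^{2}VU^{2}V$. I would then recover the remaining entries of the relation table purely as formal consequences of these five rules --- this is exactly the "interdependence'' computation --- so that the oriented set generates the same ideal but has been completed to a terminating confluent system. Reducing an arbitrary word to normal form, I expect precisely the irreducible monomials of length at most six to survive, and a direct enumeration should give $17$ of them, i.e.\ $\operatorname{rank}_{\mathbb{D}}\mathbf{A}=18$.

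For the matching lower bound $\operatorname{rank}_{\mathbb{D}}\mathbf{B}\ge 17$, I would produce that many $\mathbb{D}$-linearly independent elements of $\mathbf{B}$ directly: either through a faithful matrix representation of $\mathbf{A}$ in the spirit of the map $\varphi$ of Theorem 2, or by invoking the free nilpotent structure of $G$ from Theorem 3(iii), since a free $2$-generated nilpotent group of class $4$ forces its group algebra, and hence $\mathbf{B}$, to have at least the predicted rank. Combining the two estimates gives $17\ge\operatorname{rank}_{\mathbb{D}}R\ge\operatorname{rank}_{\mathbb{D}}\mathbf{B}\ge 17$, so all ranks equal $17$, $\pi$ is an isomorphism, and the five relations present $\mathbf{B}$ as claimed.

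The hard part will be the confluence verification. Merely orienting the five relations does not immediately yield a confluent system; one must resolve every overlap ambiguity (S-polynomial) formed among the cubic rules, the five rules, and the further consequences needed to complete the system, up to length six, where the earlier computation already shows all length-seven monomials vanish. Checking that each overlap reduces to a common normal form is the lengthy bookkeeping carried out in detail in \cite{GOS} with the aid of GBNP \cite{gbnp}. A second delicate point is that the reductions genuinely require $6$ to be a unit: several steps divide by $3$ (for instance the elimination producing $VUV^{2}=-V^{2}UV$), and the identity $C_{2}^{3}=6\,UV^{2}U^{2}V=0$ of Lemma 2 forces the corresponding length-six monomial to vanish only because $6$ is invertible in $\mathbb{D}=\mathbb{Z}[\tfrac16]$; over $\mathbb{Z}$ these same computations would instead reveal torsion, so the choice of coefficient ring is essential to obtaining this clean presentation.
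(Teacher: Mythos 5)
Your overall skeleton is the paper's own: a canonical surjection $\pi\colon R\to\mathbf{B}$ from the algebra $R$ presented by the five relations together with $U^{3}=V^{3}=0$, an upper bound obtained by showing $R$ is spanned by finitely many monomials, and a lower bound obtained from the explicit $18\times 18$ matrices (the paper delegates the first computation to \cite{GOS} and verifies the matrices with GAP). The genuine gap is in your upper bound: the rewriting system you describe cannot exist. The paper's ordering (syllable length, then total length, then total $V$-length, then reverse lexicographic) is \emph{not} a monomial order --- it is not compatible with concatenation --- so orienting relations by it carries no termination guarantee and the diamond lemma does not apply. Worse, no monomial order whatsoever can produce your claimed leading terms: in $VUV=UVU-UV^{2}+U^{2}V-V^{2}U+VU^{2}$ the words $UV^{2},VUV,V^{2}U$ have equal multidegree, and compatibility forces $UV^{2}=(UV)V>(VU)V=VUV$ if $UV>VU$, while $V^{2}U=V(VU)>V(UV)=VUV$ if $VU>UV$; so $VUV$ is never maximal. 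Likewise $UV^{2}U$ as a leading term would require both $VVU>UVV$ (to beat $U^{2}V^{2}$) and $UVV>VVU$ (to beat $V^{2}U^{2}$), a contradiction. Finally, your expected enumeration is false: the words avoiding your eight leading monomials as subwords are exactly $U$, $V$, $U^{2}$, $UV$, $VU$, $V^{2}$, $U^{2}V$, $UVU$, $UV^{2}$, $VU^{2}$, $V^{2}U$, $U^{2}VU$, $U^{2}V^{2}$, $V^{2}U^{2}$ --- fourteen of them, none of length $\geq 5$ --- so your scheme, were it valid, would prove $\mathrm{rank}_{\mathbb{D}}\mathbf{B}\leq 14$ and contradict the lower bound $17$. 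What actually happens is that the rewriting loops: reducing $V^{2}UV$ regenerates $+V^{2}UV$ after two steps, so no spanning conclusion can be drawn. The repair is either to run the Gr\"{o}bner computation with a genuine admissible order (this is what the GBNP computation \cite{gbnp} in \cite{GOS} does; the leading terms then come out differently, e.g.\ $V^{2}U$ rather than $VUV$), or to drop the rewriting language and derive by hand, from the five relations alone, enough consequences to express every monomial in the $17$ listed ones --- which is what the paper's ``interdependence analysis'' amounts to.

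Two lesser points. First, your alternative lower bound, quoting Theorem 3(iii), is circular: the paper proves that $G$ is free nilpotent of class $4$ by using the basis of $\mathbf{B}$, i.e.\ by using this lemma and the matrices; moreover the isomorphism type of $G$ alone cannot bound $\mathrm{rank}_{\mathbb{D}}\mathbf{B}$ from below, since $\mathbf{A}$ is a proper quotient of the infinite-rank group algebra $\mathbb{D}G$ and one must still exhibit $17$ elements that stay independent in that quotient --- which is exactly what the matrix representation does, so keep that branch only. Second, ``equal ranks plus surjectivity'' does not by itself give injectivity of $\pi$ over $\mathbb{D}=\mathbb{Z}[\frac{1}{6}]$, since rank counting does not exclude a torsion kernel; the correct finish is that the $17$ monomials span $R$ while their $\pi$-images are $\mathbb{D}$-independent by the matrix representation, so any kernel element, expanded in the spanning set, has all coefficients zero and $\ker\pi=0$.
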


The rank of the ring $\mathbf{A}$ is now at most $18$, and is $\mathbb{D}$%
-generated by 
\begin{eqnarray*}
&&\{1,U,V,U^{2},V^{2},UV,VU, \\
&&U^{2}V,VU^{2},UV^{2},V^{2}U,V^{2}U^{2},U^{2}V^{2}, \\
&&VUV,U^{2}VU,V^{2}UV, \\
&&U^{2}V^{2}U,V^{2}U^{2}V\}\text{.}
\end{eqnarray*}

On rewriting the generating set as $\ $%
\begin{eqnarray*}
&&%
\{1,U,V,U^{2},UV,VU,V^{2},U^{2}V,UV^{2},VU^{2},V^{2}U,U^{2}VU,U^{2}V^{2},VU^{2}V,V^{2}U^{2},V^{2}UV,
\\
&&U^{2}V^{2}U,V^{2}U^{2}V\}
\end{eqnarray*}%
we find that $a,b$ are represented by the following upper triangular matrices

\begin{center}
$a\mapsto \left( 
\begin{array}{cccccccccccccccccc}
1 & 1 & 0 & 0 & 0 & 0 & 0 & 0 & 0 & 0 & 0 & 0 & 0 & 0 & 0 & 0 & 0 & 0 \\ 
0 & 1 & 0 & 1 & 0 & 0 & 0 & 0 & 0 & 0 & 0 & 0 & 0 & 0 & 0 & 0 & 0 & 0 \\ 
0 & 0 & 1 & 0 & 0 & 1 & 0 & 0 & 0 & 0 & 0 & 0 & 0 & 0 & 0 & 0 & 0 & 0 \\ 
0 & 0 & 0 & 1 & 0 & 0 & 0 & 0 & 0 & 0 & 0 & 0 & 0 & 0 & 0 & 0 & 0 & 0 \\ 
0 & 0 & 0 & 0 & 1 & 0 & 0 & -1 & 0 & -1 & 0 & 0 & \frac{1}{2} & \frac{1}{2}
& \frac{1}{2} & 0 & 0 & 0 \\ 
0 & 0 & 0 & 0 & 0 & 1 & 0 & 0 & 0 & 1 & 0 & 0 & 0 & 0 & 0 & 0 & 0 & 0 \\ 
0 & 0 & 0 & 0 & 0 & 0 & 1 & 0 & 0 & 0 & 1 & 0 & 0 & 0 & 0 & 0 & 0 & 0 \\ 
0 & 0 & 0 & 0 & 0 & 0 & 0 & 1 & 0 & 0 & 0 & 1 & 0 & 0 & 0 & 0 & 0 & 0 \\ 
0 & 0 & 0 & 0 & 0 & 0 & 0 & 0 & 1 & 0 & 0 & 0 & 0 & 1 & 0 & 0 & 0 & 0 \\ 
0 & 0 & 0 & 0 & 0 & 0 & 0 & 0 & 0 & 1 & 0 & 0 & 0 & 0 & 0 & 0 & 0 & 0 \\ 
0 & 0 & 0 & 0 & 0 & 0 & 0 & 0 & 0 & 0 & 1 & 0 & 0 & 0 & 1 & 0 & 0 & 0 \\ 
0 & 0 & 0 & 0 & 0 & 0 & 0 & 0 & 0 & 0 & 0 & 1 & 0 & 0 & 0 & 0 & 0 & 0 \\ 
0 & 0 & 0 & 0 & 0 & 0 & 0 & 0 & 0 & 0 & 0 & 0 & 1 & 0 & 0 & 0 & 1 & 0 \\ 
0 & 0 & 0 & 0 & 0 & 0 & 0 & 0 & 0 & 0 & 0 & 0 & 0 & 1 & 0 & 0 & -1 & 0 \\ 
0 & 0 & 0 & 0 & 0 & 0 & 0 & 0 & 0 & 0 & 0 & 0 & 0 & 0 & 1 & 0 & 0 & 0 \\ 
0 & 0 & 0 & 0 & 0 & 0 & 0 & 0 & 0 & 0 & 0 & 0 & 0 & 0 & 0 & 1 & 0 & -1 \\ 
0 & 0 & 0 & 0 & 0 & 0 & 0 & 0 & 0 & 0 & 0 & 0 & 0 & 0 & 0 & 0 & 1 & 0 \\ 
0 & 0 & 0 & 0 & 0 & 0 & 0 & 0 & 0 & 0 & 0 & 0 & 0 & 0 & 0 & 0 & 0 & 1%
\end{array}%
\right) $\bigskip \linebreak $b\mapsto \left( 
\begin{array}{cccccccccccccccccc}
1 & 0 & 1 & 0 & 0 & 0 & 0 & 0 & 0 & 0 & 0 & 0 & 0 & 0 & 0 & 0 & 0 & 0 \\ 
0 & 1 & 0 & 0 & 1 & 0 & 0 & 0 & 0 & 0 & 0 & 0 & 0 & 0 & 0 & 0 & 0 & 0 \\ 
0 & 0 & 1 & 0 & 0 & 0 & 1 & 0 & 0 & 0 & 0 & 0 & 0 & 0 & 0 & 0 & 0 & 0 \\ 
0 & 0 & 0 & 1 & 0 & 0 & 0 & 1 & 0 & 0 & 0 & 0 & 0 & 0 & 0 & 0 & 0 & 0 \\ 
0 & 0 & 0 & 0 & 1 & 0 & 0 & 0 & 1 & 0 & 0 & 0 & 0 & 0 & 0 & 0 & 0 & 0 \\ 
0 & 0 & 0 & 0 & 0 & 1 & 0 & 0 & -1 & 0 & -1 & 0 & \frac{1}{2} & \frac{1}{2}
& \frac{1}{2} & 0 & 0 & 0 \\ 
0 & 0 & 0 & 0 & 0 & 0 & 1 & 0 & 0 & 0 & 0 & 0 & 0 & 0 & 0 & 0 & 0 & 0 \\ 
0 & 0 & 0 & 0 & 0 & 0 & 0 & 1 & 0 & 0 & 0 & 0 & 1 & 0 & 0 & 0 & 0 & 0 \\ 
0 & 0 & 0 & 0 & 0 & 0 & 0 & 0 & 1 & 0 & 0 & 0 & 0 & 0 & 0 & 0 & 0 & 0 \\ 
0 & 0 & 0 & 0 & 0 & 0 & 0 & 0 & 0 & 1 & 0 & 0 & 0 & 1 & 0 & 0 & 0 & 0 \\ 
0 & 0 & 0 & 0 & 0 & 0 & 0 & 0 & 0 & 0 & 1 & 0 & 0 & 0 & 0 & 1 & 0 & 0 \\ 
0 & 0 & 0 & 0 & 0 & 0 & 0 & 0 & 0 & 0 & 0 & 1 & 0 & 0 & 0 & 0 & -1 & 0 \\ 
0 & 0 & 0 & 0 & 0 & 0 & 0 & 0 & 0 & 0 & 0 & 0 & 1 & 0 & 0 & 0 & 0 & 0 \\ 
0 & 0 & 0 & 0 & 0 & 0 & 0 & 0 & 0 & 0 & 0 & 0 & 0 & 1 & 0 & 0 & 0 & -1 \\ 
0 & 0 & 0 & 0 & 0 & 0 & 0 & 0 & 0 & 0 & 0 & 0 & 0 & 0 & 1 & 0 & 0 & 1 \\ 
0 & 0 & 0 & 0 & 0 & 0 & 0 & 0 & 0 & 0 & 0 & 0 & 0 & 0 & 0 & 1 & 0 & 0 \\ 
0 & 0 & 0 & 0 & 0 & 0 & 0 & 0 & 0 & 0 & 0 & 0 & 0 & 0 & 0 & 0 & 1 & 0 \\ 
0 & 0 & 0 & 0 & 0 & 0 & 0 & 0 & 0 & 0 & 0 & 0 & 0 & 0 & 0 & 0 & 0 & 1%
\end{array}%
\text{.}\right) $\bigskip \linebreak
\end{center}

With the use of GAP \cite{gap}, it is shown that the matrices satisfy the
relations and thus $rank_{\mathbb{D}}\left( \mathbf{A}\right) =18$, $\mathbf{%
B}$ is nilpotent of degree $6$ and the group $G$ is nilpotent of degree $4$.

To prove that $G$ is free $2$-generated of nilpotency class $4$, we find \
the expressions for the basic commutators in terms of the basis elements of $%
\mathbf{B}$ \cite{GOS}:

\begin{enumerate}
\item $[a,b]-1=UV-VU-6U^{2}V+UV^{2}-5VU^{2}+2V^{2}U-4UVU$\newline
$+U^{2}V^{2}+2V^{2}U^{2}-U^{2}VU-VUV^{2}+2U^{2}V^{2}U-VU^{2}V^{2},$

\item $[a,b,a]-1=-3U^{2}V-3VU^{2}+3V^{2}U^{2}-6U^{2}VU+3U^{2}V^{2}U,$

\item $[a,b,b]-1=3UV^{2}+3V^{2}U-3U^{2}V^{2}+6V^{2}UV-3V^{2}U^{2}V,$

\item $[a,b,a,a]-1=-6U^{2}VU+3U^{2}V^{2}U,$

\item $[a,b,b,a]-1=[a,b,a,b]-1=-3U^{2}V^{2}+3V^{2}U^{2},$

\item $[a,b,b,b]-1=6V^{2}UV-3V^{2}U^{2}V.$

\item $[a,b,\ast ,\ast ,\ast ]=1$.
\end{enumerate}

Since the set of monomials of minimal weight in the above commutators form
an independent set, it follows that $G$ is free nilpotent of class $4$.

\subsection{Proof of Theorem 3 (iv)}

\begin{lemma}
The set of solutions of $X^{3}=0$ in $\mathbf{B}$ are described by the
algebraic affine variety $\mathcal{V}$ over $\mathbb{D}$, given by equation%
\begin{equation*}
\frac{x_{1}x_{2}(x_{1}+x_{2})}{2}%
-x_{1}x_{2}x_{3}-x_{1}x_{2}x_{4}+x_{2}^{2}x_{5}+x_{1}^{2}x_{6}=0.\,\,(\ast )%
\text{.}
\end{equation*}
\end{lemma}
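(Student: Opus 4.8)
The plan is to reduce the condition $X^{3}=0$ to a computation involving only the low-degree part of $X$, exploiting the filtration $\mathbf{B}\supseteq\mathbf{B}^{2}\supseteq\cdots\supseteq\mathbf{B}^{5}\supseteq\mathbf{B}^{6}=0$. Writing a general element in the $\mathbb{D}$-basis of $\mathbf{B}$ as
\[
X=x_{1}U+x_{2}V+x_{3}UV+x_{4}VU+x_{5}U^{2}+x_{6}V^{2}+X',
\]
where $X'\in\mathbf{B}^{3}$ collects the coefficients of the length-$\ge 3$ basis monomials, set $X_{1}=x_{1}U+x_{2}V$ and $X_{2}=x_{3}UV+x_{4}VU+x_{5}U^{2}+x_{6}V^{2}$. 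The first observation is that $X'$ cannot affect $X^{3}$: on expanding $X^{3}$ and discarding everything in $\mathbf{B}^{6}=0$, every summand containing a factor from $X'$ reduces to the symmetrized expression $X_{1}^{2}C+X_{1}CX_{1}+CX_{1}^{2}$, where $C$ is the degree-$3$ component of $X'$, because any product with two factors from $\mathbf{B}^{\ge 2}$ already lands in $\mathbf{B}^{6}$. Hence the first step is to prove the identity $p^{2}q+pqp+qp^{2}=0$ in $\mathbf{B}$ whenever $2\deg p+\deg q=5$, which I would verify by linearity on the generators $p\in\{U,V\}$ with $q$ running over the degree-$3$ basis monomials (plus the bilinear cross terms), using $U^{3}=V^{3}=U^{2}VU^{2}=V^{2}UV^{2}=0$ and the length-$5$ relations. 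The same identity with $(\deg p,\deg q)=(2,1)$ shows that the genuine degree-$5$ part $X_{1}X_{2}^{2}+X_{2}X_{1}X_{2}+X_{2}^{2}X_{1}$ also vanishes. Consequently $X^{3}=(X_{1}+X_{2})^{3}$ depends only on $x_{1},\dots,x_{6}$, which already explains why the remaining coordinates are unconstrained.

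Next I would compute $(X_{1}+X_{2})^{3}$ explicitly, reducing to the basis with the five defining relations of $\mathbf{B}$. The cubic part is
\[
X_{1}^{3}=x_{1}x_{2}(x_{1}+x_{2})(U^{2}V+UVU+VU^{2})=\tfrac{x_{1}x_{2}(x_{1}+x_{2})}{2}\,w,\qquad w:=UV^{2}U+U^{2}V^{2}+V^{2}U^{2},
\]
the second equality being exactly the relation $2(UVU+U^{2}V+VU^{2})=UV^{2}U+U^{2}V^{2}+V^{2}U^{2}$; this is the source both of the coefficient $\tfrac12$ and of the term $\tfrac{x_{1}x_{2}(x_{1}+x_{2})}{2}$ in $(\ast)$. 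For the degree-$4$ part I would evaluate the linear operator $T(M):=X_{1}^{2}M+X_{1}MX_{1}+MX_{1}^{2}$ on each degree-$2$ monomial $M$, and the pleasant point to establish is that each value is a scalar multiple of the \emph{same} vector $w$: namely $T(U^{2})=x_{2}^{2}w$, $T(V^{2})=x_{1}^{2}w$, and $T(UV)=T(VU)=-x_{1}x_{2}\,w$ (the last two coinciding via $VU^{2}V=UV^{2}U$ after reduction through $VUV=UVU-UV^{2}+U^{2}V-V^{2}U+VU^{2}$ and $UVU+U^{2}V+VU^{2}=\tfrac12w$). Summing these,
\[
X^{3}=(X_{1}+X_{2})^{3}=\Big(\tfrac{x_{1}x_{2}(x_{1}+x_{2})}{2}-x_{1}x_{2}x_{3}-x_{1}x_{2}x_{4}+x_{2}^{2}x_{5}+x_{1}^{2}x_{6}\Big)\,w.
\]

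Finally I would note that $w=2(VUV+UV^{2}+V^{2}U)$ is a nonzero element of the free $\mathbb{D}$-module $\mathbf{B}$, so that, $\mathbb{D}$ being a domain, $\lambda w=0$ forces $\lambda=0$. Therefore $X^{3}=0$ holds if and only if the scalar $\tfrac{x_{1}x_{2}(x_{1}+x_{2})}{2}-x_{1}x_{2}x_{3}-x_{1}x_{2}x_{4}+x_{2}^{2}x_{5}+x_{1}^{2}x_{6}$ vanishes, which is precisely equation $(\ast)$; this identifies the solution set of $X^{3}=0$ with the variety $\mathcal{V}$. The main obstacle is the first step: showing that the $\mathbf{B}^{3}$-part of $X$ is irrelevant, i.e.\ the vanishing of the symmetrized products in $\mathbf{B}^{5}$. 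Once that collapse is secured, the whole of $X^{3}$ is forced to be proportional to the single vector $w$, and the a priori overdetermined system (one equation per basis monomial occurring in $X^{3}$) degenerates to the single scalar condition $(\ast)$. Organizing and double-checking all these reductions is most safely carried out inside the faithful $18\times 18$ matrix representation of $\mathbf{A}$ used for parts (i)--(iii).
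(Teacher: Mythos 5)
Your proposal is correct and follows essentially the same route as the paper's proof: both use $\mathbf{B}^{6}=0$ to show the $\mathbf{B}^{3}$-part of $X$ cannot contribute, both verify that the degree-$5$ symmetrized products vanish (the paper's claim $f_{ijk}=0$), and both reduce the surviving degree-$3$ and degree-$4$ terms through the relation table to get $X^{3}=\lambda\,(UVU+U^{2}V+VU^{2})$ — your $w$ is exactly twice this element, and your coordinate labels are the paper's with $U$ and $V$ interchanged, which is immaterial by the symmetry of the presentation. Your organization via the operator $T$ and the explicit closing appeal to freeness of $\mathbf{B}$ over the domain $\mathbb{D}$ are just a cleaner packaging of the paper's single large expansion.
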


\begin{proof}
Let%
\begin{equation*}
X=x_{1}V+x_{2}U+x_{3}VU+x_{4}UV+x_{5}V^{2}+x_{6}U^{2}+x_{7}V^{2}U+x_{8}VU^{2}+x_{9}UV^{2}+x_{10}U^{2}V+W_{0},
\end{equation*}%
where $W_{0}\in \mathbf{B}^{4}$. Since $\mathbf{B}^{6}=0$, we deduce that $%
X^{3}=\sum x_{i}x_{j}x_{k}f_{ijk}$ where $1\leq i\leq j\leq k\leq 10$ and no
two indices belong to $\left\{ 7,8,9,10\right\} $. Furthermore, $f_{ijk}=0$
for $i=1,2;j,k=3,...,6$; for example, 
\begin{eqnarray*}
f_{134}
&=&V^{2}U^{2}V+VU^{2}V^{2}+VUVUV+UV^{2}UV+VUV^{2}U=V^{2}U^{2}V+VU^{2}V^{2}+UV^{2}UV+VUV^{2}U%
\text{ (by(4.11)) } \\
&=&V^{2}U^{2}V+2UV^{2}UV+VUV^{2}U\text{ (by(4.7))}=2V^{2}U^{2}V+2UV^{2}UV%
\text{ (by(4.14))}=0\text{ (by(4.15)).}
\end{eqnarray*}

The expression for $X^{3}$ becomes

$%
X^{3}=x_{1}^{2}x_{2}(V^{2}U+VUV+UV^{2})+x_{1}x_{2}^{2}(VU^{2}+UVU+U^{2}V)+x_{1}^{2}x_{3}(V^{2}UV+VUV^{2})+x_{1}x_{2}x_{3}(V^{2}U^{2}+UVUV+2VU^{2}V+2VUVU)+ 
$

$x_{2}^{2}x_{3}(U^{2}VU+UVU^{2})+$

$%
x_{1}^{2}x_{4}(V^{2}UV+VUV^{2})+x_{1}x_{2}x_{4}(U^{2}V^{2}+VUVU+2UV^{2}U+2UVUV)+ 
$

$x_{2}^{2}x_{4}(U^{2}VU+UVU^{2})+$

$%
x_{1}^{2}x_{6}(V^{2}U^{2}+VU^{2}V+U^{2}V^{2})+x_{1}x_{2}x_{5}(V^{2}UV+VUV^{2})+ 
$

$%
x_{2}^{2}x_{5}(U^{2}V^{2}+V^{2}U^{2}+UV^{2}U)+x_{1}x_{2}x_{6}(U^{2}VU+UVU^{2})+ 
$

$%
x_{1}^{2}x_{7}(V^{2}UV^{2})+x_{1}x_{2}x_{7}(UV^{2}UV+V^{2}U^{2}V+V^{2}UVU+VUV^{2}U)+ 
$

$x_{2}^{2}x_{7}(U^{2}V^{2}U+UV^{2}U^{2})+$

$%
x_{1}^{2}x_{8}(V^{2}U^{2}V+VU^{2}V^{2})+x_{1}x_{2}x_{8}(VU^{2}VU+UV^{2}U^{2}+VUVU^{2}+UVU^{2}V)+ 
$

$x_{2}^{2}x_{8}(U^{2}VU^{2})+$

$%
x_{1}^{2}x_{9}(U^{2}V^{2}U+UV^{2}U^{2})+x_{1}x_{2}x_{9}(UV^{2}UV+VU^{2}V^{2}+UVUV^{2}+VUV^{2}U)+ 
$

$x_{2}^{2}x_{9}(V^{2}UV^{2})+$

$%
x_{1}^{2}x_{10}(U^{2}VU^{2})+x_{1}x_{2}x_{10}(VU^{2}VU+U^{2}V^{2}U+U^{2}VUV+UVU^{2}V)+ 
$

$x_{2}^{2}x_{10}(V^{2}U^{2}V+VU^{2}V^{2})+$

$%
x_{1}x_{3}^{2}(V^{2}UVU+VUVUV+VUV^{2}U)+x_{2}x_{3}^{2}(UVUVU+VU^{2}VU+VUVU^{2})+ 
$

$%
x_{1}x_{3}x_{4}(V^{2}U^{2}V+VU^{2}V^{2}+VUVUV+UV^{2}UV+VUV^{2}U)+x_{2}x_{5}^{2}V^{2}UV^{2}+ 
$

$%
x_{1}x_{4}^{2}(UV^{2}UV+VUVUV+UVUV^{2})+x_{2}x_{4}^{2}(U^{2}VUV+UVUVU+UVU^{2}V)+ 
$

$%
x_{1}x_{3}x_{5}V^{2}UV^{2}+x_{2}x_{3}x_{5}(UVUV^{2}+VU^{2}V^{2}+V^{2}UVU+VUV^{2}U)+ 
$

$%
x_{1}x_{4}x_{5}V^{2}UV^{2}+x_{2}x_{4}x_{5}(UV^{2}UV+UVUV^{2}+V^{2}U^{2}V+V^{2}UVU)+ 
$

$x_{1}x_{5}x_{6}(V^{2}U^{2}V+VU^{2}V^{2})$.

On using our table of relations, this expression reduces to%
\begin{equation*}
X^{3}=\left(
x_{1}^{2}x_{2}+x_{1}x_{2}^{2}-2x_{1}x_{2}x_{4}-2x_{1}x_{2}x_{3}+2x_{1}^{2}x_{6}+2x_{2}^{2}x_{5}\right) (UVU+U^{2}V+VU^{2})%
\text{.}
\end{equation*}
\end{proof}

\begin{lemma}
In the ring $\mathbf{A}$, we have $X^{3}=0$ for every $1+X\in G=<1+U,1+V>$.
\end{lemma}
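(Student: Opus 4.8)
The plan is to reduce the claim to the criterion established in the previous lemma. That lemma shows that for a general element $X = x_{1}V + x_{2}U + x_{3}VU + x_{4}UV + x_{5}V^{2} + x_{6}U^{2} + \cdots$ of $\mathbf{B}$ (with the remaining summands in $\mathbf{B}^{3}$), the cube $X^{3}$ vanishes precisely when the polynomial relation $(\ast)$ holds among $x_{1},\dots,x_{6}$. Consequently, to prove $u(g)^{3}=0$ for every $g\in G$, it suffices to compute the six coefficients $x_{1},\dots,x_{6}$ of $X=u(g)$ in the standard basis and to check $(\ast)$. Since only the coefficients of the degree-$1$ and degree-$2$ monomials $V,U,VU,UV,V^{2},U^{2}$ enter the criterion, it is enough to determine $u(g)$ modulo $\mathbf{B}^{3}$.

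Next I would use the normal form of $G$. Since $G$ is free nilpotent of class $4$ on $a,b$ (part (iii) of Theorem 3), every element can be written as $g = a^{m}b^{n}[a,b]^{p}\,w$, where $w$ is a product of basic commutators of weight at least $3$. The key filtration fact, visible directly from the commutator expressions listed above, is that $u$ of a weight-$k$ commutator lies in $\mathbf{B}^{k}$; in particular $u(w)\in\mathbf{B}^{3}$ and $u([a,b]^{p})\equiv p(UV-VU)\pmod{\mathbf{B}^{3}}$. Using $u(xy)=u(x)+u(y)+u(x)u(y)$ together with $u(a^{m})=mU+\binom{m}{2}U^{2}$ and $u(b^{n})=nV+\binom{n}{2}V^{2}$, all cross terms involving $u(w)$ or the product $u(a^{m}b^{n})\,u([a,b]^{p})$ fall into $\mathbf{B}^{3}$, and one obtains
\begin{equation*}
u(g)\equiv mU+nV+\binom{m}{2}U^{2}+\binom{n}{2}V^{2}+mn\,UV+p(UV-VU)\pmod{\mathbf{B}^{3}}.
\end{equation*}
Reading off coefficients gives $x_{1}=n$, $x_{2}=m$, $x_{3}=-p$, $x_{4}=mn+p$, $x_{5}=\binom{n}{2}$, $x_{6}=\binom{m}{2}$.

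Finally I would substitute these values into $(\ast)$. This is a short polynomial identity in $m,n,p$: the two terms linear in $p$ cancel against each other, and after clearing the binomial coefficients the surviving quartic terms in $m,n$ collapse, the expression reducing to $m^{2}n^{2}-m^{2}n^{2}=0$. Hence $(\ast)$ holds for every $g\in G$, and the previous lemma yields $u(g)^{3}=0$, completing the proof.

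The step I expect to require the most care is the second one: correctly isolating the degree-$\le 2$ part of $u(g)$. One must verify that the weight-$\ge 3$ commutators $w$ and all cross products genuinely contribute only to $\mathbf{B}^{3}$, so that they cannot perturb $x_{1},\dots,x_{6}$, and that the coefficients of $UV$ versus $VU$ coming from $[a,b]$ are recorded with the correct signs; the entire verification hinges on the cancellation of the $p$-terms in $(\ast)$, so a sign error there would be fatal.
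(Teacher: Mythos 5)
Your proof is correct and follows essentially the same route as the paper's: both reduce to the variety criterion $(\ast)$ of the preceding lemma, write $g$ as $a^{m}b^{n}$ times an element of $[G,G]$ whose $u$-value is $\equiv \pm p\,(UV-VU) \pmod{\mathbf{B}^{3}}$, and exploit that $(\ast)$ involves $x_{3},x_{4}$ only through the sum $x_{3}+x_{4}$, so the commutator contribution cancels. The paper merely packages this as three steps (powers of $a,b$ give elements of $\mathcal{V}$; elements of $[G,G]$ shift $(x_{3},x_{4})$ by opposite amounts; the operation $X\circ C$ with $C$ a commutator preserves membership in $\mathcal{V}$), whereas you verify the resulting polynomial identity in $m,n,p$ in one direct computation --- a cosmetic difference only.
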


\begin{proof}
It is easy to see that $I=\mathbf{B}$ is a group with respect to the
operation $a\circ b=a+b+ab$. Unfortunately, $\mathcal{V}$ is not a subgroup
of $I$. The following steps lead to $G\subset \mathcal{V}$.

\textbf{Step 1.} For any $n,m\in \mathbf{N}$ 
\begin{equation*}
(1+U)^{n}(1+V)^{m}=1+S,S\in \mathcal{V}.
\end{equation*}

\textbf{Step 2.} If $1+C\in \lbrack G,G],$ then 
\begin{equation*}
C=y(VU-UV)+C_{3},C_{3}\in I^{3}.
\end{equation*}%
\textbf{Step 3.} If $X\in \mathcal{V},$ $C\in \lbrack G,G],$ then $X\circ
C\in \mathcal{V}$. Indeed, let $%
X=x_{1}V+x_{2}U+x_{3}VU+x_{4}UV+x_{5}V^{2}+x_{6}U^{2}+W,$ $W\in \mathbf{B}%
^{3}$. Then%
\begin{eqnarray*}
X\circ C &=&X+C+XC \\
&=&x_{1}V+x_{2}U+(x_{3}+y)VU+(x_{4}-y)UV+x_{5}V^{2}+x_{6}U^{2}+W_{1}, \\
W_{1} &\in &\mathbf{B}^{3}
\end{eqnarray*}%
and the coefficients of this element satisfy the equation (*).

Finally, since $\mathbf{B}$ is nilpotent, every element of $G$ has the form $%
(1+U)^{n}(1+V)^{m}(1+C),1+C\in \lbrack G,G]$ and the proof follows.
\end{proof}

\subsection{The $d$-generated case}

Recall $F_{d}$ is the free group of rank $d$ generated by $\left\{
y_{1},...,y_{d}\right\} $. Let $x_{i}\in \left\{ y_{1}^{\pm
1},...,y_{d}^{\pm 1}\right\} $ $\left( 1\leq i\leq m\right) $ and $%
r=x_{1}x_{2}...x_{m}$ be a reduced word. Then $\left\{
x_{1},x_{1}x_{2},...,\left( x_{1}x_{2}...x_{i}\right) ,..,r\right\} $ is the
set of initial segments of $r$.

The following formula holds in the augmentation ideal of $\mathbb{Z}F_{d}$, 
\begin{equation*}
u\left( r\right) =u\left( x_{1}...x_{m}\right) =\sum_{1\leq
i_{1}<...<i_{t}<m}u\left( x_{i_{1}}\right) ...u\left( x_{i_{t}}\right)
+u\left( x_{1}\right) ...u\left( x_{m}\right) \text{.}
\end{equation*}%
The monomial $\nu =u\left( x_{1}\right) ...u\left( x_{m}\right) $ in the
above expression is the leading term and it is of highest weight. On the
other hand, given a monomial $\nu $, there exists $r\in F_{d}$, as above,
for which $\nu $ is a leading monomial of $u\left( r\right) $; write $%
r=r_{\nu }\left( y_{i}\right) $.

As we assume that $u\left( a_{i}\right) ^{3}=0$ for all $i$, with the
notation $u\left( a_{i}\right) =U_{i}$, our monomials in the quotient ring
are of the form $\nu =U_{i_{1}}^{\varepsilon _{1}}U_{i_{2}}^{\varepsilon
_{2}}...U_{i_{s}}^{\varepsilon _{s}}$ where $U_{i_{t}}\not=U_{i_{t+1}}$ and $%
\varepsilon _{i}=1,2$. Furthermore, since $u\left( a_{i}^{-1}\right)
=-U_{i}+U_{i}^{2}$, we can choose $r_{\nu }=r_{\nu }\left( a_{i}\right) $ to
be $a_{i_{1}}^{\delta _{1}}a_{i_{2}}^{\delta _{2}}...a_{i_{s}}^{\delta _{s}}$
where $a_{i_{t}}\not=a_{i_{t+1}}$ , $\delta _{i}=1$ for $\varepsilon _{i}=1$
and $\delta _{i}=-1$ for $\varepsilon _{i}=2$.

We denote the augmentation $\omega \left( \mathbf{A}_{d}\right) $ by $\omega
_{d}$.

\subsubsection{Proof of Theorem $4$}

For $d=1$, we take $\mathbf{A}_{1}=\frac{\mathbb{D}F_{1}}{\left\langle
\left( x-1\right) ^{3}\mid x\in P_{1}\right\rangle }$ where $P_{1}=\left\{
y_{1}\right\} $; the augmentation ideal $\omega _{1}$ is the $\mathbb{D}$%
-span of $\mathbf{W}_{\mathbf{1}}$ $\mathbf{=}$ $\left\{
U_{1},U_{1}^{2}\right\} $. For $d=2$, we take $\mathbf{A}_{2}=\mathbf{A}=%
\frac{\mathbb{D}F_{2}}{\left\langle \left( x-1\right) ^{3}\mid x\in
P\right\rangle }$ where $P=\left\{
y_{1},y_{2},y_{1}y_{2},y_{1}^{-1}y_{2},y_{1}^{2}y_{2},y_{1}y_{2}^{2}\right\} 
$. We recall that the following relations which hold in $\omega _{2}=\omega
\left( \mathbf{A}_{2}\right) $, 
\begin{eqnarray*}
U_{2}U_{1}U_{2}^{2} &=&-U_{2}^{2}U_{1}U_{2} \\
U_{2}U_{1}U_{2}
&=&U_{1}U_{2}U_{1}-U_{1}U_{2}^{2}+U_{1}^{2}U_{2}-U_{2}^{2}U_{1}+U_{2}U_{1}^{2}%
\text{.}
\end{eqnarray*}%
We conclude 
\begin{equation*}
U_{2}\mu U_{2}^{2}=-U_{2}^{2}\mu U_{2},U_{2}^{2}\mu U_{2}^{2}=0
\end{equation*}%
and $U_{2}\mu U_{2}$ is a sum of monomials $\mu _{1}U_{2}\mu _{2}$, $\mu
_{1}^{\prime }U_{2}^{2},U_{2}^{2}\mu _{2}^{\prime }$ where $\mu _{1},\mu
_{2},\mu _{1}^{\prime },\mu _{2}^{\prime }$ are monomials in $\omega _{1}$.
Therefore $\omega _{2}$ is $\mathbb{D}$-generated by the monomials 
\begin{eqnarray*}
\mathbf{W}_{2} &\mathbf{=}&\mathbf{W}_{\mathbf{1}}\cup \left\{
U_{2}^{\varepsilon }\right\} \cup \mathbf{W}_{\mathbf{1}}U_{2}^{\varepsilon
}\cup U_{2}^{\varepsilon }\mathbf{W}_{\mathbf{1}} \\
&&\cup U_{2}^{2}\mathbf{W}_{\mathbf{1}}\text{ }U_{2}\text{ }\cup U_{2}^{2}%
\mathbf{W}_{\mathbf{1}}U_{2}\mathbf{W}_{\mathbf{1}}\text{ } \\
&&\cup \text{ }\mathbf{W}_{\mathbf{1}}U_{2}^{2}\mathbf{W}_{\mathbf{1}}U_{2}%
\text{ } \\
&&\cup \text{ }\mathbf{W}_{\mathbf{1}}U_{2}^{2}\mathbf{W}_{\mathbf{1}}U_{2}%
\mathbf{W}_{\mathbf{1}}
\end{eqnarray*}
where $\varepsilon =1,2$.

We note that $\mathbf{W}_{2}$ is somewhat larger than we had produced
earlier. However, $\mathbf{W}_{2}$ lends itself to easier generalization.

Suppose $\mathbf{A}_{d}$ is defined such that $\omega _{d}$ is generated by
a finite set of non-zero monomials $\mathbf{W}_{\mathbf{d}}$ $\mathbf{=}$ $%
\left\{ \nu _{1}\left( a_{i}\right) ,...,\nu _{t}\left( a_{i}\right)
\right\} $. Define $\widetilde{P_{d}}$ to be the closure of%
\begin{equation*}
P_{d}\cup \left\{ r_{\nu _{1}}\left( y_{i}\right) ,...,r_{\nu _{t}}\left(
y_{i}\right) \right\}
\end{equation*}%
under taking initial segments in $F_{d}$. Then 
\begin{eqnarray*}
P_{d+1} &=&\widetilde{P_{d}} \\
&&\cup \left\{ e,h^{\pm 1}\mid h\in \widetilde{P_{d}}\right\} y_{d+1} \\
&&\cup \widetilde{P_{d}}\text{ }y_{d+1}^{2} \\
&&\cup \left\{ h^{2}\mid h\in \widetilde{P_{d}}\right\} y_{d+1}
\end{eqnarray*}%
and we define 
\begin{equation*}
\mathbf{A}_{d+1}=\frac{\mathbb{D}F_{d+1}}{\left\langle \left( x-1\right)
^{3}\mid x\in P_{d+1}\right\rangle }\text{.}
\end{equation*}

We will write $b=a_{d+1}$ in the ring $\mathbf{A}_{d+1}=\frac{\mathbb{D}F_{d}%
}{\left\langle \left( x-1\right) ^{3}\mid x\in P_{d+1}\right\rangle }$ and
write $V=U_{d+1}$. Thus, for every $q\in \widetilde{P_{d}}$ the subring $%
\left\langle q,b\right\rangle $ of $\mathbf{A}_{d+1}$ is a homomorphic image
of $\mathbf{A}_{2}$ via $a\rightarrow q,b\rightarrow b$. Using the
epimorphism produces 
\begin{equation*}
Vu\left( q\right) V^{2}=-V^{2}u\left( q\right) V
\end{equation*}%
which generalizes to%
\begin{equation*}
V\mu V^{2}=-V^{2}\mu V,V^{2}\mu V^{2}=0
\end{equation*}%
for all $\mu $ monomial in the subring $\left\langle
U_{1},...,U_{d}\right\rangle $. Furthermore, 
\begin{equation*}
Vu\left( q\right) V=u\left( q\right) Vu\left( q\right) -u\left( q\right)
V^{2}+u\left( q\right) ^{2}V-V^{2}u\left( q\right) +Vu\left( q\right) ^{2}%
\text{.}
\end{equation*}%
Let $q=q_{1}x$ where $q_{1}$ is an initial segment of $q$. Then $u\left(
q\right) =u\left( q_{1}\right) +u\left( x\right) +u\left( q_{1}\right)
u\left( x\right) $ and, as the above formula holds for $u\left( q_{1}\right)
,u\left( x\right) $, we expand our expression to obtain a formula for $%
Vu\left( q_{1}\right) u\left( x_{1}\right) V$.

To illustrate, we apply the substitution 
\begin{equation*}
a_{1}\rightarrow a_{1}a_{2},\text{ }b\rightarrow b
\end{equation*}%
which translates to the substitution 
\begin{eqnarray*}
U_{1} &\rightarrow &U_{1}+U_{2}+U_{1}U_{2}, \\
U_{1}^{2} &\rightarrow
&U_{2}^{2}U_{1}^{2}-U_{2}^{2}U_{1}+U_{2}^{2}-U_{2}U_{1}^{2}+U_{1}U_{2}+U_{2}U_{1}+U_{1}^{2}
\\
V &\rightarrow &V\text{.}
\end{eqnarray*}%
in $\omega _{2}$.

The equation 
\begin{equation*}
VUV=UVU-UV^{2}+U^{2}V-V^{2}U+VU^{2}
\end{equation*}%
transforms to 
\begin{eqnarray*}
&&V\left( U_{1}+U_{2}+U_{1}U_{2}\right) V \\
&=&\left( U_{1}+U_{2}+U_{1}U_{2}\right) V\left(
U_{1}+U_{2}+U_{1}U_{2}\right) -\left( U_{1}+U_{2}+U_{1}U_{2}\right) V^{2} \\
&&+\left(
U_{2}^{2}U_{1}^{2}-U_{2}^{2}U_{1}+U_{2}^{2}-U_{2}U_{1}^{2}+U_{1}U_{2}+U_{2}U_{1}+U_{1}^{2}\right) V
\\
&&-V^{2}\left( UVU-UV^{2}+U^{2}V-V^{2}U+VU^{2}\right) \\
&&+V\left(
U_{2}^{2}U_{1}^{2}-U_{2}^{2}U_{1}+U_{2}^{2}-U_{2}U_{1}^{2}+U_{1}U_{2}+U_{2}U_{1}+U_{1}^{2}\right)
\end{eqnarray*}%
which, in expanded form, is%
\begin{eqnarray*}
&&VU_{1}V+VU_{2}V+VU_{1}U_{2}V \\
&=&U_{1}VU_{1}+U_{2}VU_{1}+U_{1}U_{2}VU_{1} \\
&&+U_{1}VU_{2}+U_{2}VU_{2}+U_{1}U_{2}VU_{2} \\
&&+U_{1}VU_{1}U_{2}+U_{2}VU_{1}U_{2}+U_{1}U_{2}VU_{1}U_{2} \\
&&-U_{1}V^{2}-U_{2}V^{2}-U_{1}U_{2}V^{2} \\
&&+U_{2}^{2}U_{1}^{2}V-U_{2}^{2}U_{1}V-U_{2}U_{1}^{2}V+U_{1}U_{2}V+U_{2}U_{1}V+U_{2}^{2}V+U_{1}^{2}V
\\
&&-V^{2}U_{1}-V^{2}U_{2}-V^{2}U_{1}U_{2} \\
&&+VU_{2}^{2}U_{1}^{2}-VU_{2}^{2}U_{1}-VU_{2}U_{1}^{2}+VU_{1}U_{2}+VU_{2}U_{1}+VU_{2}^{2}+VU_{1}^{2}%
\text{.}
\end{eqnarray*}%
We separate monomials involving just $U_{1}$ or $U_{2}$ in the above: 
\begin{eqnarray*}
VU_{1}U_{2}V &=&\left(
-VU_{1}V+U_{1}VU_{1}-U_{1}V^{2}-V^{2}U_{1}+VU_{1}^{2}+U_{1}^{2}V\right) \\
&&+\left(
-VU_{2}V+U_{2}VU_{2}-U_{2}V^{2}-V^{2}U_{2}+VU_{2}^{2}+U_{2}^{2}V\right) \\
&&+U_{2}VU_{1}+U_{1}U_{2}VU_{1} \\
&&+U_{1}VU_{2}+U_{1}U_{2}VU_{2} \\
&&+U_{1}VU_{1}U_{2}+U_{2}VU_{1}U_{2}+U_{1}U_{2}VU_{1}U_{2} \\
&&-U_{1}U_{2}V^{2} \\
&&+U_{2}^{2}U_{1}^{2}V-U_{2}^{2}U_{1}V-U_{2}U_{1}^{2}V+U_{1}U_{2}V+U_{2}U_{1}V
\\
&&-V^{2}U_{1}U_{2} \\
&&+VU_{2}^{2}U_{1}^{2}-VU_{2}^{2}U_{1}-VU_{2}U_{1}^{2}+VU_{1}U_{2}+VU_{2}U_{1}%
\text{.}
\end{eqnarray*}%
We separate monomials involving just $U_{1}$ or $U_{2}$ in the above: 
\begin{eqnarray*}
VU_{1}U_{2}V &=&\left(
-VU_{1}V+U_{1}VU_{1}-U_{1}V^{2}-V^{2}U_{1}+VU_{1}^{2}+U_{1}^{2}V\right) \\
&&+\left(
-VU_{2}V+U_{2}VU_{2}-U_{2}V^{2}-V^{2}U_{2}+VU_{2}^{2}+U_{2}^{2}V\right) \\
&&+U_{2}VU_{1}+U_{1}U_{2}VU_{1} \\
&&+U_{1}VU_{2}+U_{1}U_{2}VU_{2} \\
&&+U_{1}VU_{1}U_{2}+U_{2}VU_{1}U_{2}+U_{1}U_{2}VU_{1}U_{2} \\
&&-U_{1}U_{2}V^{2} \\
&&+U_{2}^{2}U_{1}^{2}V-U_{2}^{2}U_{1}V-U_{2}U_{1}^{2}V+U_{1}U_{2}V+U_{2}U_{1}V
\\
&&-V^{2}U_{1}U_{2} \\
&&+VU_{2}^{2}U_{1}^{2}-VU_{2}^{2}U_{1}-VU_{2}U_{1}^{2}+VU_{1}U_{2}+VU_{2}U_{1}%
\text{.}
\end{eqnarray*}%
Therefore, 
\begin{eqnarray*}
VU_{1}U_{2}V &=& \\
&&~U_{1}U_{2}VU_{1}U_{2} \\
&&+U_{1}U_{2}VU_{1}+U_{1}U_{2}VU_{2} \\
&&+U_{1}VU_{1}U_{2}+U_{2}VU_{1}U_{2} \\
&&+U_{1}U_{2}V+U_{1}VU_{2}+VU_{1}U_{2} \\
&&+U_{2}U_{1}V+U_{2}VU_{1}+VU_{2}U_{1} \\
&&\text{ }+U_{2}^{2}U_{1}^{2}V+VU_{2}^{2}U_{1}^{2}\text{ } \\
&&-U_{2}^{2}U_{1}V-VU_{2}^{2}U_{1}-U_{2}U_{1}^{2}V-VU_{2}U_{1}^{2} \\
&&-U_{1}U_{2}V^{2}-V^{2}U_{1}U_{2}\text{.}
\end{eqnarray*}%
The summands are of the form $\mu _{1}V\mu _{2}$, $\mu _{1}^{\prime
}V^{2},V^{2}\mu _{2}^{\prime }$ where $\mu _{1},\mu _{2},\mu _{1}^{\prime
},\mu _{2}^{\prime }$are monomials in the subring $\mathbf{C}%
_{2}=\left\langle U_{1},U_{2}\right\rangle $ and each of $l\left( \mu
_{1}\right) +l\left( \mu _{2}\right) ,l\left( \mu _{1}^{\prime }\right)
,l\left( \mu _{2}^{\prime }\right) $ is at least $l\left( U_{1}U_{2}\right)
=2$.

Thus, we derive the fact: for every $\mu $ monomial in the subring $\mathbf{C%
}_{d}=\left\langle U_{1},...,U_{d}\right\rangle $, of length $l\left( \mu
\right) =m$, the monomial $V\mu V$ decomposes as the sum of monomials of the
form $\mu _{1}V\mu _{2}$, $\mu _{1}^{\prime }V^{2},V^{2}\mu _{2}^{\prime }$
for some monomials $\mu _{1},\mu _{2},\mu _{1}^{\prime },\mu _{2}^{\prime }$
in the subring $\left\langle U_{1},...,U_{d}\right\rangle $ such that each
of $l\left( \mu _{1}\right) +l\left( \mu _{2}\right) ,l\left( \mu
_{1}^{\prime }\right) ,l\left( \mu _{2}^{\prime }\right) $ is at least $m$.

Given the above, we conclude that 
\begin{eqnarray*}
\mathbf{W}_{\mathbf{d+1}} &\mathbf{=}&\mathbf{W}_{\mathbf{d}}\cup \left\{
U_{d+1}^{\varepsilon }\right\} \cup \mathbf{W}_{\mathbf{d}%
}U_{d+1}^{\varepsilon }\cup U_{d+1}^{\varepsilon }\mathbf{W}_{\mathbf{d}} \\
&&\cup U_{d+1}^{2}\mathbf{W}_{\mathbf{d}}\text{ }U_{d+1}\text{ }\cup
U_{d+1}^{2}\mathbf{W}_{\mathbf{d}}U_{d+1}\mathbf{W}_{\mathbf{d}}\text{ } \\
&&\cup \text{ }\mathbf{W}_{\mathbf{d}}U_{d+1}^{2}\mathbf{W}_{\mathbf{d}%
}U_{d+1}\text{ } \\
&&\cup \text{ }\mathbf{W}_{\mathbf{d}}U_{d+1}^{2}\mathbf{W}_{\mathbf{d}%
}U_{d+1}\mathbf{W}_{\mathbf{d}}
\end{eqnarray*}%
where $\varepsilon =1,2$.

A monomial in $\omega _{d+1}$ has the form $W=\mu _{1}V^{i_{1}}\mu
_{2}V^{i_{2}}...\mu _{k}V^{i_{k}}$ where each $\mu _{i}\ $is a monomial in
the subring $\mathbf{C}_{d}$. Define $L\left( W\right) =\sum_{1\leq j\leq
k}l\left( \mu _{j}\right) $. Therefore $W$ is a sum of monomials of the form 
$\sigma V^{2s}\sigma ^{\prime }V^{t}\sigma ^{^{\prime \prime }}$ where $%
\sigma ,\sigma ^{\prime },\sigma ^{\prime \prime }$ are monomials in $%
\mathbf{C}_{d}$, with $s,t\in \left\{ 0,1\right\} $ and $l\left( \sigma
\right) +l\left( \sigma ^{\prime }\right) +l\left( \sigma ^{\prime \prime
}\right) \geq L\left( W\right) $.

Suppose the degree of nilpotency of $\omega _{d}$ is $\rho _{d}$ and $%
L\left( W\right) \geq 3\rho _{d}$. Then one of $l\left( \sigma \right)
,l\left( \sigma ^{\prime }\right) ,l\left( \sigma ^{\prime \prime }\right) $
is at least $\rho _{d}$ and therefore one of $\sigma ,\sigma ^{\prime
},\sigma ^{\prime \prime }$ is zero.


\begin{thebibliography}{9}
\bibitem{zelmanov} Zelmanov, E., Some open problems in the theory of
infinite dimensional algebras, J. Korean Math. Soc. 44 (2007), 1185-1195.

\bibitem{vovsi} Vovsi, S., Topics in Varieties of Group Representations,
London Mathematical Society, Lecture Note Series 163, (1991).

\bibitem{V-L} Vaughan-Lee, M., The Restricted Burnside Problem, 2nd Edition,
Oxford Science Publications, (1993).

\bibitem{GOS} Grishkov, A., Oliveira, R., Sidki, S., On groups with cubic
polynomial conditions", arXiv: 1401.5114.

\bibitem{gap} The GAP Group, GAP -- Groups, Algorithms, and Programming,
Version 4.7.2; 2013. (http://www.gap-system.org)

\bibitem{ricardo} http://ricardo.mat.ufg.br/

\bibitem{gbnp} Cohen, A.M., Knopper, J.W., GBNP: GAP package for computing
Grobner bases of noncommutative polynomials, Version 1.0.1; 2010.
(http://www.gap-system.org/Packages/gbnp.html)

\bibitem{abd} Abdollahi, Khosravi, H., On the right and left $4$-Engel
elements, Comm. Algebra 38 (2010), 939-943.

\bibitem{traus} G. Traustason, \ Left 3-Engel elements in groups of exponent
5, J. Algebra, 414 (2014), 41-71.
\end{thebibliography}
\end{document}